\def\HomLF{\operatorname{{{Hom}}_{\LF}}}
\def\Aut{\operatorname{Aut}}
\def\s{\sigma}
\def\del{\partial}
\def\cube#1#2#3#4#5#6#7#8{
& #5 \ar[rr] \ar[dl] \ar@{-}[d] && #6 \ar[dd] \ar[dl] \\
#1 \ar[rr] \ar[dd]  & \ar[d] & #2 \ar[dd] \\
& #7 \ar@{-}[r] \ar[dl] & \ar[r] & #8 \ar[dl] \\
#3 \ar[rr] && #4 \\
}
\def\Hom{\operatorname{Hom}}
\def\a{\alpha}
\def\b{\beta}
\def\Tot{\operatorname{Tot}}
\def\sing{{\text{sing}}}
\def\image{\operatorname{im}}
\def\im{\image}
\def\ker{\operatorname{ker}}
\def\cP{\mathcal P}
\def\cD{\mathcal D}
\def\coker{\operatorname{coker}}
\def\dm{\operatorname{dim}}
\def\Sing{\operatorname{Sing}}
\def\Tor{\operatorname{Tor}}
\def\Spec{\operatorname{Spec}}
\def\lra{\longrightarrow}
\def\into{\hookrightarrow}
\def\onto{\twoheadrightarrow}
\def\cH{\mathcal{H}}
\def\e{\epsilon}
\newcommand{\Q}{\mathbb{Q}}
\newcommand{\C}{\mathbb{C}}
\newcommand{\Z}{\mathbb{Z}}
\newcommand{\N}{\mathbb{N}}
\newcommand{\fp}{{\mathfrak p}}
\newcommand{\fm}{{\mathfrak m}}
\numberwithin{equation}{section}
\theoremstyle{plain} 
\newtheorem{thm}[equation]{Theorem}
\newtheorem{introthm}{Theorem}
\newtheorem{introcor}[introthm]{Corollary}
\newtheorem*{introthm*}{Theorem}
\newtheorem{cor}[equation]{Corollary}
\newtheorem{lem}[equation]{Lemma}
\newtheorem{prop}[equation]{Proposition}
\newtheorem{quest}[equation]{Question}
\theoremstyle{definition}
\newtheorem{defn}[equation]{Definition}
\newtheorem{ex}[equation]{Example}
\theoremstyle{remark}
\newtheorem{rem}[equation]{Remark}
\def\Dsing{D_{\operatorname{sing}}}
\def\Perf{\operatorname{Perf}}
\newcommand{\supp}{\operatorname{supp}}
\newcommand{\xra}[1]{\xrightarrow{#1}}
\newcommand{\xla}[1]{\xleftarrow{#1}}
\newcommand{\id}{\operatorname{id}}
\def\lf{\operatorname{lf}}
\def\LF{\operatorname{LF}}
\def\mf{\operatorname{mf}}
\def\lffl{\operatorname{lf}^{\mathrm{fl}}}
\def\mffl{\operatorname{mf}^{\mathrm{fl}}}
\def\len{\operatorname{length}}
\def\Hlf{\operatorname{H}}
\def\codim{\operatorname{codim}}
\def\cPsi{\psi_{\mathrm{cyc}}}
\def\Fold{\operatorname{Fold}}
\def\mult{\operatorname{mult}}
\def\stable{\mathrm{stable}}
\def\and{{ \text{ and } }}
\def\res{\operatorname{res}}
\def\tB{\tilde{B}}
\def\tchi{\tilde{\chi}}
\def\bt{{\mathbf t}}
\newcommand{\cyc}{\operatorname{cyc}}
\begin{document}
\begin{abstract}
We define Adams operations on matrix factorizations, and we show these operations enjoy analogues of several key properties of the Adams operations on perfect complexes with support developed by Gillet-Soul\'e in \cite{GS87}. As an application, we give a proof of a conjecture of Dao-Kurano concerning the vanishing of Hochster's $\theta$ invariant.
\end{abstract}

\title{Adams Operations on Matrix Factorizations}

\author{Michael K. Brown}
\address{Hausdorff Center for Mathematics, Villa Maria, Endenicher Allee 62, D-53115 Bonn, Germany}
\email{mbrown@math.uni-bonn.de}

\author{Claudia Miller}
\address{Mathematics Department, Syracuse University, Syracuse, NY 13244-1150, USA}
\email{clamille@syr.edu}

\author{Peder Thompson}
\address{Department of Mathematics and Statistics, Texas Tech University, Broadway and Boston, Lubbock, TX 79409}
\email{peder.thompson@ttu.edu}

\author{Mark E. Walker}
\address{Department of Mathematics, University of Nebraska, Lincoln, NE 68588-0130, USA}
\email{mark.walker@unl.edu}

\thanks{This work was partially supported by a grant from the
 Simons Foundation 
(\#318705 for Mark Walker) 
and grants from the National Science Foundation 
(NSF Award DMS-0838463 for Michael Brown and Peder Thompson and 
NSF Award DMS-1003384 for Claudia Miller).}

\maketitle
\tableofcontents

\section{Introduction}
The goal of this paper is to establish a theory of Adams operations on the Grothendieck group of matrix factorizations and to use these
operations to prove a conjecture of Dao-Kurano (\cite{ DK12} Conjecture 3.1 (2)) concerning the vanishing of Hochster's $\theta$ pairing for a pair of modules defined on an isolated hypersurface
singularity.

Let $Q$ be a commutative Noetherian ring, and let $f \in Q$. A {\em matrix factorization} of $f$ in $Q$ is a 
$\Z/2$-graded, finitely generated projective $Q$-module $P = P_0 \oplus P_1$, equipped with an odd degree $Q$-linear endomorphism $d$ satisfying
$d^2 = f \id_P$. In other words, a matrix factorization is a pair of maps of finitely generated projective $Q$-modules, $(\a: P_1 \to P_0, \b:  P_0 \to P_1)$, satisfying $\a
\b = f \id_{P_0}$ and $\b \a = f \id_{P_1}$. 

When $f = 0$, a matrix factorization of $f$ is the same thing as a $\Z/2$-graded complex of finitely generated projective $Q$-modules. In this case, we have the
evident $\Z/2$-graded analogues of chain maps and homotopies of such. These, in fact, generalize to an arbitrary $f$: 
the matrix factorizations of $f \in Q$ form the objects of a category $\mf(Q,f)$, in which 
a morphism between objects $P$ and $P'$ of $\mf(Q,f)$ is a degree zero $Q$-linear map $g: P \to P'$ such that $d_{P'} \circ g = g \circ d_P$. In other words, a
morphism is a pair of maps $g_0: P_0 \to P_0'$ and $g_1: P_1 \to P_1'$ causing the evident pair of squares to commute.
A {\em homotopy} joining morphisms $g_1, g_2: P \to P'$ in $\mf(Q,f)$ is 
a $Q$-linear map $h: P \to P'$ of odd degree such that $d_{P'} h + h d_P = g_1
- g_2$. The {\em homotopy category} of $\mf(Q,f)$ is the category $[\mf(Q,f)]$ obtained from $\mf(Q,f)$ by identifying homotopic morphisms. It is well-known
that, when $Q$ is regular 
and $f$ is a non-zero-divisor, $[\mf(Q,f)]$ may be equipped with a canonical triangulated structure (see,
for instance, \cite{orlov2003triangulatedInRussian} Section 3.1). 

Much of the interest in matrix factorizations arises from the following result. For a Noetherian ring $R$, let
$D^b(R)$ denote the bounded derived category of $R$. Objects of $D^b(R)$ are 
bounded complexes of finitely generated $R$-modules, and morphisms are obtained from chain maps by inverting the collection of quasi-isomorphisms. 
Let $\Perf(R)$ denote the full triangulated subcategory of $D^b(R)$ consisting of bounded complexes of
finitely generated and projective $R$-modules, and let $\Dsing(R)$ denote the Verdier quotient $D^b(R)/\Perf(R)$, called the \emph{singularity category} of $R$. 
The following theorem is essentially due to work of Buchweitz and Eisenbud  in \cite{buchweitz1986maximal,eisenbud1980homological}; this particular
formulation of the result is proven by Orlov in \cite{orlov2003triangulatedInRussian}: 

\begin{introthm}\cite[Theorem 3.9]{orlov2003triangulatedInRussian} \label{introthm1}
If $Q$ is regular 
and $f$ is a non-zero-divisor, there is an equivalence of triangulated categories
$$
[\mf(Q,f)] \xrightarrow{\cong}
\Dsing(Q/(f))
$$
determined by sending a matrix factorization $(\a: P_1 \to P_0, \b:
P_0 \to P_1)$ to $\coker(\a)$.
\end{introthm}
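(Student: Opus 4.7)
My plan is to factor the desired equivalence through the stable category of maximal Cohen--Macaulay $R$-modules, where $R := Q/(f)$. Under our hypotheses ($Q$ regular, $f$ a non-zero-divisor), $R$ is a hypersurface and in particular Gorenstein, so two classical results fit together: Eisenbud's theorem relating matrix factorizations to MCM modules over a hypersurface, and Buchweitz's theorem identifying the stable category of MCM modules over a Gorenstein ring with the singularity category.

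First I would define the candidate functor $\Phi \colon [\mf(Q,f)] \to \Dsing(R)$ on the level of $\mf(Q,f)$ by sending $(\alpha \colon P_1 \to P_0,\; \beta \colon P_0 \to P_1)$ to $M := \coker(\alpha)$. The relation $\alpha\beta = f\,\id_{P_0}$ shows $f \cdot M = 0$, so $M$ is naturally an $R$-module, and the complex
\[
\cdots \xra{\bar\beta} R\tensor_Q P_1 \xra{\bar\alpha} R\tensor_Q P_0 \xra{\bar\beta} R\tensor_Q P_1 \xra{\bar\alpha} R\tensor_Q P_0 \to M \to 0
\]
is exact (exactness follows from $Q$ regular and $f$ a non-zero-divisor by a standard Koszul-type argument), giving $M$ a $2$-periodic $R$-free resolution. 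In particular $M$ is MCM. A morphism of matrix factorizations induces an $R$-linear map on cokernels, and a null-homotopy $h$ produces a factorization of the induced map through $R\tensor_Q P_0$, hence zero in $\Dsing(R)$; so $\Phi$ descends to the homotopy category and takes values in the full subcategory $\uMCM(R) \subset \Dsing(R)$.

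Next I would verify $\Phi$ is triangulated by checking that the standard mapping cone in $\mf(Q,f)$ maps to a triangle in $\Dsing(R)$, using the long exact sequence of cokernels together with the 2-periodic free resolutions. Then I would construct the essential inverse: given an MCM $R$-module $N$, choose a minimal (or any) free resolution over $R$; by Eisenbud's periodicity theorem its tail is $2$-periodic, and lifting two consecutive differentials to $Q$ produces maps $\tilde\alpha, \tilde\beta$ whose composites equal $f$ times an identity after reduction mod $f$; regularity of $Q$ forces equality on the nose, yielding a matrix factorization $X(N)$ with $\coker \tilde\alpha \cong N$. One checks independence (up to isomorphism in $[\mf(Q,f)]$) of the choices of lift and of the resolution, and functoriality.

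The main obstacle, as always in proofs of this equivalence, is full faithfulness: one must match $\Hom_{[\mf(Q,f)]}(P,P')$ with $\Hom_{\Dsing(R)}(\coker\alpha, \coker\alpha')$. I would handle this by first invoking Buchweitz's theorem, which says that for Gorenstein $R$ the composite $\uMCM(R) \into D^b(R) \to \Dsing(R)$ is an equivalence; this reduces the problem to comparing $\Hom_{[\mf(Q,f)]}(P,P')$ with $\uHom_R(\coker\alpha, \coker\alpha')$ (morphisms modulo those factoring through a projective). Using the $2$-periodic $R$-free resolution of $\coker \alpha$, any $R$-linear map to $\coker \alpha'$ lifts to a chain map on resolutions, and by a diagram chase (using that $Q$ is projective over itself and that $f$ is a non-zero-divisor) this chain map lifts further to a morphism of matrix factorizations over $Q$; homotopies correspond similarly. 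The verification that these lifts are well-defined modulo homotopy (resp.\ modulo factoring through a free) is the technical heart of the argument, but it is essentially an exercise in lifting along the surjection $Q \onto R$ using the projectivity of the modules in $P$ and $P'$.
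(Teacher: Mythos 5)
The paper does not prove this statement; it cites it from Orlov \cite[Theorem 3.9]{orlov2003triangulatedInRussian} and attributes the underlying ideas to Eisenbud and Buchweitz. Your proposal is therefore not to be compared against an argument in the paper, but assessed on its own. The overall strategy---factoring through the stable category of MCM modules via Eisenbud plus Buchweitz---is the classical route and is sound.

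There is, however, a real imprecision in your construction of the inverse functor. You claim that lifting two consecutive differentials of a $2$-periodic $R$-free resolution of $N$ to $Q$ produces $\tilde\alpha,\tilde\beta$ with $\tilde\alpha\tilde\beta = f\,\id$, and that ``regularity of $Q$ forces equality on the nose.'' This is not what lifting gives you: you only learn $\tilde\alpha\tilde\beta \equiv 0 \pmod f$, i.e.\ $\tilde\alpha\tilde\beta = f\gamma$ for some $\gamma$, and nothing forces $\gamma = \id$ for arbitrary lifts. The correct (Eisenbud's) argument avoids this: since $N$ is MCM over $R$, one has $\pd_Q N = 1$, hence a short exact sequence $0 \to F_1 \xra{\alpha} F_0 \to N \to 0$ with $F_i$ finitely generated projective over $Q$; because $fN = 0$, the map $f\,\id_{F_0}$ lifts through $\alpha$ to $\beta \colon F_0 \to F_1$ with $\alpha\beta = f\,\id_{F_0}$, and injectivity of $\alpha$ then forces $\beta\alpha = f\,\id_{F_1}$. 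A second, milder issue: your argument is phrased locally (minimal resolutions, Eisenbud periodicity), but the statement is for an arbitrary regular ring; this globalizes if one replaces ``free'' by ``projective'' and ``MCM'' by the appropriate non-local notion (e.g.\ vanishing of $\Ext^i_R(-,R)$ for $i>0$) and uses that projective dimension over a Noetherian ring is computed locally, but that reformulation should be stated explicitly rather than left implicit. Both issues are repairable, so this is a gap in execution, not in strategy.
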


\begin{rem} In \cite {orlov2003triangulatedInRussian}, Orlov assumes $Q$ contains a field and has finite Krull dimension, but these assumptions are in fact not needed
  for this Theorem to hold.
\end{rem} 

Let $R:=Q/(f)$. Under the assumptions of Theorem~\ref{introthm1}, the Grothendieck group $K_0(\mf(Q,f))$ of the triangulated category $[\mf(Q,f)]$ is isomorphic to the quotient $\frac{G_0(R)}{\im(K_0(R) \to G_0(R))}$. So, 
defining a notion of Adams operations on $K_0(\mf(Q,f))$, in this setting, amounts to defining such operations on this quotient.

For a closed subset $Z$ of $\Spec(Q)$, define $\cP^Z(Q)$ to be
the category of bounded complexes of finitely generated and projective $Q$-modules whose homology is supported on $Z$.
Gillet-Soul\'e define lambda and Adams operations on the
Grothendieck group $K_0^Z(Q) := K_0(\cP^Z(Q))$ (\cite{GS87} Sections 3 and 4). It is tempting to mimic their approach to define Adams operations on $K_0(\mf(Q,f))$, since $\mf(Q,f)$ is somewhat analogous to $\cP^{V(f)}(Q)$. But their construction relies on the Dold-Kan correspondence relating $\N$-graded complexes to simplicial modules; since matrix factorizations are $\Z/2$-graded, such an approach is not available for 
$K_0(\mf(Q,f))$.

Instead, we model our approach after the construction of the \emph{cyclic Adams operations} $\psi^p_{\cyc}$ on $K_0^Z(Q)$ developed in
\cite{brown2016cyclic} (see also \cite{Ati66}, \cite{Haution}, and \cite{Kock}). 
Let us give a brief summary of the construction of the operations $\cPsi^p$ and some of their properties.

Fix a prime $p$. We assume that $p$ is invertible in $Q$ and that $Q$ contains all $p$-th roots of unity (when $Q$ is local, the
case of primary interest to us, we can find such a prime $p$, at least after passing to a faithfully flat extension of $Q$).
For a perfect complex of $Q$-modules $X$, let
$T^p(X)$ denote the $p$-th tensor power of $X$, which comes equipped with a canonical left action by the symmetric group $\Sigma_p$. 
For a $p$-th root of unity $w \in Q$, set 
$T^p(X)^{(w)}$ to be the eigenspace of eigenvalue $w$ for the action of the $p$-cycle $(1 \, 2 \, \cdots \, p)$ on $T^p(X)$. 
We define
$$
\cPsi^p(X) = [T^p(X)^{(1)}] - [T^p(X)^{(\zeta)}]
$$
where $\zeta$ is a primitive $p$-th root of unity.

In Sections 2 and 3 of \cite{brown2016cyclic}, it is established that this formula induces a
well-defined operation on $K_0^Z(Q)$ (see also \cite{Haution}). 
In fact, by Corollary 6.14 of loc. cit., if $p!$ is invertible in $Q$, $\cPsi^p$ agrees with the $p$-th Adams operation on $K_0^Z(Q)$ defined by Gillet-Soul\'e. More generally, we have:

\begin{introthm}\cite[Theorem 3.7]{brown2016cyclic} \label{introthm2} 
If $p$ is a prime, and $Q$ contains $\frac{1}{p}$ and all the $p$-th roots of unity, 
then the action of $\cPsi^p$ on
  $K_0^Z(Q)$ satisfies the four Gillet-Soul\'e axioms 
  defining a degree $p$ Adams operation. 
\end{introthm}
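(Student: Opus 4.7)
The plan is to verify, one by one, the four Gillet-Soul\'e axioms characterizing a degree $p$ Adams operation on $K_0^Z(Q)$: additivity (i.e., $\cPsi^p$ is a well-defined group endomorphism), naturality under flat pullback and change of support, the normalization $\cPsi^p([L])=[L]^p$ for a line bundle $L$ placed in degree zero, and the Koszul identity $\cPsi^p([\Kos(\underline f)])=p^c\,[\Kos(\underline f)]$ for a length-$c$ regular sequence $\underline f$ contained in the ideal defining $Z$.

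The technical backbone is that, since $p\in Q^\times$, the cyclic subgroup $\langle(1\,2\,\cdots\,p)\rangle\subset\Sigma_p$ acts semisimply on $T^p(X)$ with orthogonal projectors $e_w=\frac{1}{p}\sum_{i=0}^{p-1}w^{-i}\sigma^i$ onto the eigenspaces $T^p(X)^{(w)}$, and each $e_w$ is a chain map; hence every eigenspace is itself a bounded complex of finitely generated projectives supported on $Z$, with a well-defined class in $K_0^Z(Q)$. The heart of the proof is additivity. Given a short exact sequence $0\to X'\to X\to X''\to 0$ of complexes in $\cP^Z(Q)$, I would filter $T^p(X)$ by the number of tensor factors drawn from $X'$. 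The associated graded splits into two \emph{pure} summands $T^p(X')$ and $T^p(X'')$, together with, for each $0<i<p$, a \emph{mixed} summand isomorphic to the induced representation $\mathrm{Ind}_{\{1\}}^{\Z/p}\bigl(T^i(X')\otimes T^{p-i}(X'')\bigr)$. Because $p$ is prime, the cyclic action on each mixed summand is free, so all $p$ eigenspaces contribute the same class in $K_0^Z(Q)$; the virtual class $[(-)^{(1)}]-[(-)^{(\zeta)}]$ then annihilates every mixed piece, leaving $\cPsi^p([X])=\cPsi^p([X'])+\cPsi^p([X''])$.

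Normalization is immediate, since the cyclic group acts trivially on $L^{\otimes p}$, leaving only the eigenspace for $w=1$. Naturality follows formally once additivity is in hand, because $T^p(-)$ and the projectors $e_w$ both commute with flat base change. For the Koszul axiom I would use multiplicativity of $\cPsi^p$ under external products -- itself a consequence of the decomposition $T^p(X\otimes Y)^{(w)}=\bigoplus_{w_1w_2=w}T^p(X)^{(w_1)}\otimes T^p(Y)^{(w_2)}$ combined with the Galois symmetry that makes all classes $[T^p(X)^{(\zeta^i)}]$ with $1\le i\le p-1$ equal when the $\Z/p$-action comes from cyclic permutation -- to reduce to the case $c=1$, and then compute $T^p(Q\xra{f}Q)$ and its cyclic eigenspace decomposition by hand to read off the desired factor of $p$; the general $c$ then follows from $\Kos(\underline f)=\Kos(f_1)\otimes\cdots\otimes\Kos(f_c)$.

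The main obstacle is additivity: the chain-level filtration argument identifying the mixed associated graded pieces as free $\Z/p$-representations whose eigenspace classes cancel in $\cPsi^p$. Both the primality of $p$ (to rule out fixed points of the $\Z/p$-action on mixed tensor monomials, guaranteeing freeness) and the invertibility of $p$ in $Q$ (to split the eigenspace decomposition at the cochain level rather than merely up to homotopy) play essential roles here, and this is the step whose $\Z/2$-graded analog is expected to require the genuinely new input of the present paper.
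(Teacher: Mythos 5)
The theorem you are proving is not actually proved in the present paper---it is a verbatim citation of Theorem 3.7 of \cite{brown2016cyclic}---but the paper summarizes that proof's structure and replays it, nearly verbatim, for matrix factorizations in Theorem~\ref{psi-GS-axioms-mf}, so your proposal can be compared against that template. Your core technical move is correct and is the same one that underlies the cited construction: since a short exact sequence in $\cP^Z(Q)$ is degree-wise split, $T^p(X)$ carries a $C_p$-stable filtration by ``number of tensor factors from $X'$''; the pure pieces of the associated graded are $T^p(X')$ and $T^p(X'')$, while for $0<i<p$ the mixed piece $\bigoplus_{|S|=i} T^S(X')\otimes T^{\bar S}(X'')$ has free $C_p$-action on the indexing set because $p$ is prime, hence is an induced $Q[C_p]$-module whose eigenspaces all have equal class; these cancel in $t^p_1 - t^p_\zeta$. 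Likewise your route to the Koszul axiom via the K\"unneth-type eigenspace decomposition $T^p(X\otimes Y)^{(w)}=\bigoplus_{w_1 w_2=w}T^p(X)^{(w_1)}\otimes T^p(Y)^{(w_2)}$ and the Galois/normalizer symmetry making the primitive-eigenvalue classes equal is the intended mechanism.

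One substantive mismatch is worth flagging: your enumeration of the four Gillet--Soul\'e axioms does not agree with the one the paper uses. The paper's list (transparently modeled in Theorem~\ref{psi-GS-axioms-mf}) is (1) group endomorphism, (2) multiplicativity with respect to the external product $K_0^Z\times K_0^W\to K_0^{Z\cap W}$, (3) functoriality along arbitrary ring maps (not only flat ones---everything is built from finitely generated projectives, so arbitrary base change is fine), and (4) the Koszul normalization for a \emph{single} element $f$, from which the identity for a length-$c$ sequence follows as a corollary via (2). Your third listed axiom, $\cPsi^p([L])=[L]^p$ for a line bundle $L$, cannot even be posed in $K_0^Z(Q)$ unless $Z=\Spec Q$ (a line bundle concentrated in degree $0$ has full support) and is not one of the four axioms here. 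Multiplicativity, which you relegate to a tool for reducing the Koszul axiom, is itself axiom (2) and needs to be established as such---but you do in fact sketch a correct proof of it, so this is a presentation issue rather than a gap in the mathematics. You should also note explicitly that well-definedness requires invariance under homotopy equivalence, not only additivity over short exact sequences; this follows since $T^p$ and the exact eigenspace projectors send $\Sigma_p$-equivariant homotopy equivalences to homotopy equivalences, but it is a separate check from the filtration argument.
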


We refer the reader to Theorem 3.7 of \cite{brown2016cyclic} for a precise statement of the four Gillet-Soul\'e axioms. A consequence of Theorem~\ref{introthm2} is that the action of $\cPsi^p$ on $K_0^Z(Q)_\Q:=K_0^Z(Q) \otimes \Q$ is diagonalizable: 
there is a ``weight decomposition''
$$
K_0^Z(Q)_\Q = \bigoplus_{i =c}^d  K_0^Z(Q)_{\Q}^{(i)},
$$
where $K_0^Z(Q)_{\Q}^{(i)}$ is the eigenspace of $\cPsi^p$ of eigenvalue $p^i$, and $c$ is the codimension of $Z$ (loc. cit. Corollary 3.12). 

In Section~\ref{operations}, we use the operations $\cPsi^p$ as a model to construct cyclic Adams operations $\cPsi^p$ on the Grothendieck group
$K_0(\mf(Q,f))$, as well as more general versions for matrix factorizations with a support condition.
In Theorem \ref{psi-GS-axioms-mf} and Proposition \ref{prop727},  we prove:

\begin{introthm} \label{introthm3} If $p$ is prime, and $Q$ contains $\frac{1}{p}$ and all the $p$-th roots of unity, the operator $\cPsi^p$ on $K_0(\mf(Q,f))$
satisfies the evident analogues of the four Gillet-Soul\'e axioms for a $p$-th Adams operation. 

Moreover, if $Q$ is regular 
and $f \in Q$ is a non-zero-divisor, the canonical surjection
$$
K_0^{V(f)}(Q)\onto K_0(\mf(Q,f))
$$
is compatible with the action of $\cPsi^p$.
\end{introthm}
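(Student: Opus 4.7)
The plan is to adapt the proof of \cite[Theorem 3.7]{brown2016cyclic} to the matrix factorization setting, relying on the fact that the construction of $\cPsi^p$ uses only the $\Sigma_p$-equivariant tensor power $T^p$ together with the eigenspace decomposition under a chosen $p$-cycle, and these operations make sense uniformly in the $\Z/2$-graded setting. The key technical subtlety throughout is that the $p$-th tensor power of a matrix factorization of $f$ is naturally a matrix factorization of $pf$; since $p$ is invertible in $Q$, rescaling differentials (by a unit) provides a canonical equivalence $\mf(Q, pf) \simeq \mf(Q, f)$, which we use to interpret $\cPsi^p(X)$ as an element of $K_0(\mf(Q,f))$. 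Handling this rescaling consistently is likely the main bookkeeping challenge.

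For the first assertion, I would verify each of the four Gillet-Soul\'e axioms by transporting the proof from the perfect complex setting. Well-definedness (invariance under homotopy equivalence and additivity on exact triangles) follows from $\Sigma_p$-equivariance of the tensor power functor together with the exactness of the eigenspace projection, which is valid because $p$ is invertible and all $p$-th roots of unity lie in $Q$. Additivity on triangles follows, as in loc.~cit., from the binomial decomposition of $T^p(X \oplus Y)$ as a $\Sigma_p$-equivariant object: only the two extreme summands $T^p(X)$ and $T^p(Y)$ contribute to the eigenspaces for $1$ and $\zeta$, with the contributions prescribed by the formula. Multiplicativity follows from the natural isomorphism $T^p(X \otimes Y) \cong T^p(X) \otimes T^p(Y)$ as $\Sigma_p$-modules, compatibly with eigenspace decomposition. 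The two remaining normalization axioms (behavior on ``rank one'' matrix factorizations and on Koszul-type matrix factorizations associated to regular sequences) are verified by explicit computation on such objects.

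For the second assertion, the canonical surjection $K_0^{V(f)}(Q) \onto K_0(\mf(Q,f))$ is induced by a tensor-compatible functor $\cP^{V(f)}(Q) \to \mf(Q,f)$, essentially obtained via Orlov's equivalence in Theorem~\ref{introthm1} composed with derived restriction along $Q \onto Q/(f)$. Since $\cPsi^p$ is defined by the same universal formula $[T^p(-)^{(1)}] - [T^p(-)^{(\zeta)}]$ in both settings, compatibility reduces to showing that this functor intertwines the tensor power construction and the $\Sigma_p$-action, modulo the identification $\mf(Q, pf) \simeq \mf(Q, f)$. Given the setup of Section~\ref{operations}, I expect this part to be essentially formal, following from naturality of the eigenspace decomposition under any symmetric monoidal functor.

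I expect the main obstacle to lie in the first part, specifically in the interplay between the $\Z/2$-grading on matrix factorizations, the Koszul signs for the $\Sigma_p$-action on $T^p(X)$, and the rescaling identification $\mf(Q, pf) \simeq \mf(Q, f)$. One must verify that homotopies of matrix factorizations induce $\Sigma_p$-equivariant homotopies on tensor powers and that the eigenspace projection is compatible with the rescaling, in order for the formula to descend to a well-defined operator on $K_0(\mf(Q,f))$. Once these coherence issues are settled, the axiom verifications should proceed in close parallel to the perfect-complex arguments of \cite{brown2016cyclic}.
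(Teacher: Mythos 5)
Your treatment of the four axioms follows the paper's strategy: parts (1)--(3) transport directly from \cite[Theorem~3.7]{brown2016cyclic}, and the signed $\Sigma_p$-action on $T^p$ together with exactness of the eigenspace projection handle well-definedness and additivity in the $\Z/2$-graded setting just as in the $\Z$-graded one. However, you underplay axiom (4). In the paper this is the only axiom whose proof is spelled out, because it is not a routine computation: one must establish a $C_p$-equivariant isomorphism $T^p((g,h)) \cong (g,ph) \otimes_Q (0,0) \otimes_Q \cdots \otimes_Q (0,0)$, where the right-hand factors carry a twisted $C_p$-action via roots of unity, and this isomorphism is produced by a discrete-Fourier-transform change of basis ($e_i \mapsto \frac{1}{p}\sum_j \zeta^{ij} e_j$). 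The eigenspace of each $(0,0)$ factor is then computed directly, giving the factor $\prod_{i=1}^{p-1}(1-\zeta^i) = p$ that combines with $\mult_{1/p}$ to yield the eigenvalue $p$. Your ``verified by explicit computation on such objects'' is where the real content lives, and without naming this mechanism the proposal leaves the hardest step unaddressed.

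For the second assertion there is a genuine gap. You assert that the surjection $K_0^{V(f)}(Q) \onto K_0(\mf(Q,f))$ is ``induced by a tensor-compatible functor $\cP^{V(f)}(Q) \to \mf(Q,f)$, essentially obtained via Orlov's equivalence composed with derived restriction,'' and then say compatibility is ``essentially formal'' from naturality under a symmetric monoidal functor. No such functor exists. A perfect $Q$-complex supported on $V(f)$ carries no intrinsic way to fold into a matrix factorization of $f$; one must first choose a degree-one endomorphism $s$ with $ds + sd = f$ and $s^2 = 0$, and this choice is neither canonical nor visible through Orlov's equivalence. The paper instead factors the map through the category $P(K_f/Q)$ of dg-modules over the Koszul dga $K_f = Q[\e]/(\e^2)$, shows the forgetful functor $P(K_f/Q) \to \cP^{V(f)}(Q)$ induces an isomorphism on $K_0$ (this is where regularity of $Q$ is used, via truncation of semi-projective resolutions), and defines $\rho_f$ via the Fold functor. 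Compatibility with $\cPsi^p$ is then checked on classes $[P]$ that admit a Koszul structure $s$, using the explicit identity $T^p(\Fold(P,d,s)) \cong \Fold(T^p(P),T^p(d),T^p(s))$ together with a separate lemma showing $\mult_u \circ \rho_f = \rho_{uf}$. Your proposal identifies the rescaling $\mf(Q,pf) \simeq \mf(Q,f)$ as a bookkeeping issue but misses the more fundamental problem of producing the matrix factorization from the perfect complex in a way that commutes with tensor powers.
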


For $Q$ regular, $f$ a non-zero-divisor, and $R = Q/(f)$, 
given a finitely generated $R$-module $M$, let $[M]_\stable \in K_0(\mf(Q,f))$ denote the image of $[M] \in G_0(R)$ under the canonical surjection
$G_0(R) \onto K_0(\mf(Q,f))$ given by Theorem \ref{introthm1}.

\begin{introcor}
\label{introcor} 
Assume $Q$ is a regular ring 
containing $\frac{1}{p}$ and all the $p$-th roots of unity for some
prime $p$, and suppose $f \in Q$ is a non-zero-divisor. The action of $\cPsi^p$ induces an eigenspace decomposition
$$
K_0(\mf(Q,f))_\Q
= 
\bigoplus_{i =1}^d  K_0(\mf(Q,f))_\Q^{(i)}
$$
Moreover, if $M$ is a finitely generated $R$-module, then
$$
[M]_\stable \ 
\in 
\bigoplus_{i=\codim_{R} M +1}^d  K_0(\mf(Q,f))_\Q^{(i)}.
$$
\end{introcor}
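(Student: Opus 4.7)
The plan is to deduce the corollary formally from Theorem~\ref{introthm3}, together with the corresponding facts already known for $K_0^{V(f)}(Q)_\Q$ from the Gillet-Soul\'e and \cite{brown2016cyclic} frameworks.

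For the eigenspace decomposition, the first step is to invoke Theorem~\ref{introthm3}: since $\cPsi^p$ on $K_0(\mf(Q,f))$ satisfies the four Gillet-Soul\'e axioms, the standard $\lambda$-ring argument (cf.\ \cite{brown2016cyclic}, Corollary~3.12) gives diagonalizability over $\Q$ with eigenvalues among powers of $p$. To pin down the range of weights as $\{p, p^2, \ldots, p^d\}$, I would use the equivariant surjection $K_0^{V(f)}(Q)_\Q \onto K_0(\mf(Q,f))_\Q$ from the second part of Theorem~\ref{introthm3}; since $V(f)$ has codimension $1$ in the regular ring $Q$, the source decomposes over $i = 1, \ldots, d = \dim Q$ by loc.\ cit., and an equivariant surjection transfers this constraint to the target.

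For the weight bound on $[M]_\stable$, set $c = \codim_R M$ and choose a finite $Q$-projective resolution $P_\bullet \to M$, which exists because $Q$ is regular. Since $f$ kills $M$, the homology $H_\ast(P_\bullet) = M$ is supported on $V(f)$, so $[P_\bullet] \in K_0^{V(f)}(Q)$. The construction of the surjection $K_0^{V(f)}(Q) \onto K_0(\mf(Q,f))$---essentially folding $P_\bullet$ into a matrix factorization using a contracting homotopy for multiplication by $f$ on $P_\bullet$---should send $[P_\bullet]$ to $[M]_\stable$, since both classes correspond to $M$ under Orlov's equivalence (Theorem~\ref{introthm1}). Now $\codim_Q M = c + 1$, so by the Gillet-Soul\'e refinement of the weight decomposition (cf.\ \cite{GS87}, Th\'eor\`eme~8.2)---which states that a perfect complex whose homology has support of codimension $\geq c+1$ has $K_0$-class concentrated in weights $\geq c+1$---we obtain
\[
[P_\bullet] \in \bigoplus_{i = c+1}^{d} K_0^{V(f)}(Q)_\Q^{(i)}.
\]
Equivariance of the surjection then yields the claimed containment for $[M]_\stable$.

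The main obstacle is the identification of the image of $[P_\bullet]$ under the surjection with $[M]_\stable$. This is not a purely formal consequence of Orlov's equivalence: it requires an explicit compatibility between the construction of the surjection $K_0^{V(f)}(Q) \onto K_0(\mf(Q,f))$ (carried out earlier in the paper) and the stabilization map $G_0(R) \onto K_0(\mf(Q,f))$ provided by Theorem~\ref{introthm1}. Once that identification is in hand, everything else is a transfer via equivariance from the already-established Adams eigenvalue calculations on $K_0^{V(f)}(Q)_\Q$.
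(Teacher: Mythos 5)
Your proposal is essentially the paper's own argument (proved there as Theorem~\ref{psi-eigenspaces}): define the eigenspaces of $K_0(\mf(Q,f))_\Q$ as the images of the eigenspaces of $K_0^{V(f)}(Q)_\Q$ under the $\cPsi^p$-equivariant surjection $\rho_f$, and transfer Corollary~3.12 of \cite{brown2016cyclic} across it. Your treatment of the weight bound for $[M]_\stable$, including the codimension bookkeeping $\codim_Q M = \codim_R M + 1$, matches the paper.

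One imprecision worth noting: you open by claiming the four Gillet--Soul\'e axioms by themselves yield diagonalizability of $\cPsi^p$ on $K_0(\mf(Q,f))_\Q$. They do not. In the Gillet--Soul\'e framework diagonalizability comes from the $\gamma$-filtration, which requires the full system of $\lambda$-operations; those are not constructed for $\mf(Q,f)$ in this paper (the authors explicitly sidestep this). Fortunately the step is superfluous: once you have a surjection commuting with $\cPsi^p$ from a group on which $\cPsi^p$ is diagonalized over $\Q$, the target inherits an eigenspace decomposition immediately, which is exactly the paper's (and the tail of your) argument. So the logic works, just not for the reason stated in your first sentence.

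You correctly isolate the one genuinely nontrivial compatibility: that $\rho_f$ carries the class of a $Q$-projective resolution of $M$ to $[M]_\stable$. The paper establishes this in Section~2.3 by factoring $\rho_f$ through the Koszul dga $K_f = Q[\e]/(\e^2)$: one shows $K_0(P(K_f/Q)) \cong K_0^{V(f)}(Q) \cong G_0(R)$ compatibly, so that $\rho_f([P]) = [\Fold(P,d,s)]$ whenever $P$ carries a degree-one $s$ with $ds+sd = f$, $s^2 = 0$. This recovers exactly the identification you flag as the ``main obstacle,'' so your instinct about where the real content lies is on target.
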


In Section~\ref{DK}, we give an application of the above results. For
the rest of this introduction, assume $Q$ is a regular local ring with
maximal ideal $\fm$, and assume $f$ is a non-zero element of
$\fm$. Assume also that $R = Q/(f)$ is an
isolated singularity: that is, $R_\fp$ is regular for all $\fp \in \Spec(R) \setminus \{\fm\}$.   
Then for any pair of finitely generated $R$-modules $(M,N)$, we have 
$$
\Tor^R_i(M,N) \cong \Tor^R_{i+2}(M,N)
$$ 
and
$$
\len \Tor^R_i(M,N) < \infty
$$
for $i \gg 0$. This motivates the following definition.

\begin{defn} \label{def728}
With $Q, f, R$ as above, for a pair of finitely generated $R$-modules $(M, N)$,
set
$$
\theta_R(M,N) = \len\left(\Tor_{2i}^R(M,N)\right) -  \len\left(\Tor_{2i+1}^R(M,N)\right)
$$
for $i \gg 0$. 
\end{defn}

The pairing $\theta_R(-,-)$ is called \emph{Hochster's theta pairing}, since it first appeared in work of Hochster \cite{hochster1981dimension}.
The theta pairing should be
regarded as the analogue, for the singularity category $D_\sing(R)$, of the intersection multiplicity pairing that occurs, for example, in Serre's multiplicity conjectures. 
There has been much recent work on better understanding the theta pairing, including when it vanishes and how it relates to more classical
invariants. 
Buchweitz and van Straten \cite{BvS} show that, for complex isolated hypersurface singularities,  the theta pairing can be recovered from 
the linking form on the link of an isolated singularity. In the same setting, Polishchuk and Vaintrob \cite{PV} relate it to the classical residue pairing using
the boundary bulk map. 
It was conjectured by Dao that $\theta$ vanishes for all isolated hypersurface singularities $R$ such that
$\dm(R)$ is even, and this has now been proven in almost all cases; see \cite{PV, BvS, MPSW, WalkerTheta}.
We refer the reader to Section 3 of \cite{DK12} for additional history of the theta pairing
and a list of several other conjectures. 

One such conjecture, loc.\ cit.\ Conjecture 3.1 (2), is an analogue
of Serre's Vanishing Conjecture (cf.\ the Remark on page 111 of \cite{Ser65}).  
This conjecture was proven by Dao in the case where $R$ is excellent and contains field, using a geometric approach (\cite{dao2013decent} Theorem 3.5).
As an application of the properties of Adams operations on matrix factorizations that we establish in Section~\ref{operations}, we prove this conjecture in full generality:

\begin{introthm}[cf. Theorem \ref{dao}]\label{introthm5}
Let $(Q, \fm)$ be a regular local ring and  $f \in \fm$ with $f \neq 0$. Suppose that $R=Q/(f)$ is an isolated singularity. If $M$ and $N$ are finitely generated $R$-modules such that 
$$
\dim M + \dim N \leq \dim R
$$
then $\theta_R(M,N)=0$.   
\end{introthm}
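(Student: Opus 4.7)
The plan is to view $\theta^R$ as a bilinear pairing on $K_0(\mf(Q,f))_\Q$ and to exploit the eigenspace decomposition of Corollary \ref{introcor} together with a weight compatibility under the cyclic Adams operations constructed in Section \ref{operations}. A standard faithfully flat extension lets us assume, at no cost, that $Q$ contains $\tfrac{1}{p}$ and a full set of $p$-th roots of unity for some prime $p$, since neither the isolated singularity hypothesis nor the vanishing of $\theta^R(M,N)$ is affected by such an extension. First I would check that $\theta^R$ descends to a $\Q$-bilinear form
$$
\theta^R \colon K_0(\mf(Q,f))_\Q \otimes K_0(\mf(Q,f))_\Q \to \Q
$$
along the surjection $G_0(R) \onto K_0(\mf(Q,f))$ of Theorem \ref{introthm1}. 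The isolated singularity assumption ensures $\theta^R(M,N)$ is well-defined (the tails $\Tor^R_i(M,N)$ have finite length for $i\gg 0$), while $\theta^R(M,-)\equiv 0$ whenever $M$ is $R$-perfect, since then $\Tor^R_i(M,-)=0$ for $i\gg 0$; this is precisely what the descent requires.

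The heart of the argument is the weight compatibility
$$
\theta^R(\cPsi^p x,\, \cPsi^p y) \;=\; p^{\dim Q}\, \theta^R(x,y) \qquad \text{for all } x,y\in K_0(\mf(Q,f))_\Q.
$$
To establish it, I would lift the Tor computation to the ambient regular ring $Q$: if $P_\bullet\to M$ and $P'_\bullet\to N$ are bounded $Q$-projective resolutions, then $P_\bullet\otimes_Q P'_\bullet$ is a perfect $Q$-complex whose homology is supported on $V(\operatorname{ann}_R M)\cap V(\operatorname{ann}_R N)\subseteq V(f)$, a set which the isolated singularity hypothesis shrinks to $\{\fm\}$ for the tail that computes $\theta^R$. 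Combining Theorem \ref{introthm3} (the canonical surjection $K_0^{V(f)}(Q)\onto K_0(\mf(Q,f))$ intertwines the two cyclic Adams structures) with the classical Gillet-Soul\'e fact that $\cPsi^p$ acts on $K_0^{\{\fm\}}(Q)_\Q\cong\Q$ as multiplication by $p^{\dim Q}$ — the closed point supports only the single weight $\dim Q$ — one identifies $\theta^R$ with an Euler-characteristic-type invariant pulled back from $K_0^{\{\fm\}}(Q)_\Q$, which yields the displayed identity.

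Granting the weight identity, the conclusion is immediate. Decomposing $[M]_\stable = \sum_i x^{(i)}$ and $[N]_\stable = \sum_j y^{(j)}$ in the eigenspace decomposition of Corollary \ref{introcor} and using bilinearity,
$$
p^{i+j}\, \theta^R(x^{(i)}, y^{(j)}) \;=\; \theta^R(\cPsi^p x^{(i)}, \cPsi^p y^{(j)}) \;=\; p^{\dim Q}\, \theta^R(x^{(i)}, y^{(j)}),
$$
so $\theta^R(x^{(i)},y^{(j)})=0$ unless $i+j=\dim Q=\dim R+1$. Again by Corollary \ref{introcor}, $x^{(i)}=0$ for $i<\codim_R M + 1$ and $y^{(j)}=0$ for $j<\codim_R N + 1$. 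The hypothesis $\dim M+\dim N\le\dim R$ gives $\codim_R M+\codim_R N\ge\dim R$, so every surviving pair $(i,j)$ satisfies $i+j\ge\dim R+2>\dim Q$, forcing $\theta^R(M,N)=0$.

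The main obstacle is clearly the third step, the weight-$\dim Q$ compatibility. The cyclic Adams operations on $\mf(Q,f)$ are defined via $\Z/2$-graded tensor-power eigenspaces rather than through Dold-Kan, so the reduction of $\theta^R$ to a $Q$-side Euler characteristic must be carried out in a way that respects both Adams structures simultaneously — which is exactly what the compatibility half of Theorem \ref{introthm3} supplies, and why its proof (rather than merely its statement) is the technical engine of this application.
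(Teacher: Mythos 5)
Your overall architecture matches the paper's: reduce by faithfully flat extension to the case of a full set of $p$-th roots of unity, express $\theta_R(M,N)$ as an Euler characteristic of a cup product of stable classes in $K_0(\mf^\fm(Q,0))$, decompose $[M]_\stable$ and $[N]_\stable$ into $\cPsi^p$-eigenspaces, and play off the weight-additivity of the cup product against the purity of $\chi$. That much is right, and the concluding arithmetic ($i+j \ge \codim_R M + \codim_R N + 2 \ge \dim R + 2 > \dim Q$) is also correct.

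The gap is in your proposed proof of the ``weight compatibility'' $\theta_R(\cPsi^p x, \cPsi^p y) = p^{\dim Q}\,\theta_R(x,y)$, which is equivalent to the identity $\chi \circ \cPsi^p = p^d\,\chi$ on $K_0(\mf^\fm(Q,0))$. You propose to deduce it from the classical Gillet--Soul\'e fact that $K_0^{\fm}(Q)_\Q \cong \Q$ is pure of weight $d$, by ``identifying $\theta_R$ with an Euler-characteristic-type invariant pulled back from $K_0^{\fm}(Q)_\Q$.'' This cannot work as stated, because the class $[M]_\stable \cup [N]_\stable^\circ$ lives in $K_0(\mf^\fm(Q,0))$, the Grothendieck group of $\Z/2$-graded complexes with finite-length homology, and there is no known surjection from the $\Z$-graded group $K_0^{\fm}(Q)$ onto it through which $\chi$ would factor. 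Whether $K_0(\mf^\fm(Q,0))$ is generated by the folded Koszul class is precisely the open Question~\ref{quest726} in the paper, which the authors explicitly could not answer and in fact suspect has a negative answer (see Example~\ref{koszul}). So one cannot shrink the weight support of $K_0(\mf^\fm(Q,0))_\Q$ to $\{d\}$ by citing the classical purity of $K_0^\fm(Q)_\Q$. A further, smaller issue: ``lifting the Tor computation to $Q$'' via bounded $Q$-projective resolutions of $M$ and $N$ computes $\Tor^Q_\bullet(M,N)$, not the tails of $\Tor^R_\bullet(M,N)$; the correct lift is through matrix factorizations (Koszul resolutions over $K_f$), as in Proposition~\ref{prop727b}.

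The paper sidesteps the purity question by proving directly, in Theorem~\ref{KeyLemma}, that $\chi \circ \cPsi^p = p^d \chi$ as maps $K_0(\mf^\fm(Q,0)) \to \Z$, without asserting anything about the weight decomposition of that group. This is the genuine technical content of the application, and it does not follow from Theorem~\ref{introthm3}. Its proof (Section~\ref{technical}) requires Cartan--Eilenberg resolutions of $\Z/2$-graded complexes (Lemma~\ref{lemCE}), a $\Z/2$-graded K\"unneth lemma for proper complexes (Lemma~\ref{lem730b}), a spectral-sequence computation adapted to $\Z/2$-graded bicomplexes (Lemma~\ref{LemA}), and finally the observation (Lemma~\ref{key-sub-lemma}) that $\chi(\bt^p_\zeta(M)) = \chi(\bt^p_\zeta(\Hlf(M)))$, so the computation reduces to the case of trivial differential, where the class is a multiple of $[k]$ and hence of the folded Koszul class, on which the formula is known from Corollary~\ref{cor728}. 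Your proposal does not contain this machinery, and without it the weight identity is unproven.
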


We close this introduction with a sketch of our proof of Theorem~\ref{introthm5}. 
We easily reduce to the case where there is a prime $p$ such that $Q$ contains $\frac{1}{p}$ and all $p$-th roots of unity. Given a matrix factorization $P=(\alpha: P_1 \to P_0, \beta: P_0 \to P_1)$ of $f$, one may obtain a matrix factorization $P^\circ$ of $-f$ by negating $\beta$. 
In Proposition~\ref{prop727b}, we show 
$$
\theta_R(M,N) = \chi \left( [M]_\stable \cup [N]_\stable^\circ \right),
$$
where $- \cup -$ is the pairing  induced by tensor product of matrix factorizations,
and $\chi$ denotes the Euler characteristic. 
The assumptions ensure that $[M]_\stable \cup [N]_\stable^\circ$ is a class in $K_0(\mf^{\fm}(Q,0))$, the Grothendieck group of $\Z/2$-graded complexes of
finitely generated projective $Q$-modules with finite length homology, 
so that $\chi$ is well-defined. By Corollary~\ref{introcor} and the linearity of $\chi$, we may assume that the classes $[M]_\stable, [N]_\stable$ lie in
eigenspaces 
$K_0(\mf(Q, 0))_\Q^{(i)}$ and $K_0(\mf(Q, 0))_\Q^{(j)}$, respectively, where $i + j > d = \dim Q$.
By properties of the operations $\cPsi^p$ established in Theorem~\ref{introthm3}, 
$[M]_\stable \cup [N]_\stable^\circ \in K_0(\mf^{\fm}(Q, 0))_\Q^{(i+j)}$.

At this point, one would like to argue that $K_0(\mf^{\fm}(Q,0))_\Q = K_0(\mf^{\fm}(Q,0))^{(d)}_\Q$, which would force $[M]_\stable \cup [N]_\stable^\circ  = 0$. 
Indeed, one might expect that $K_0(\mf^{\fm}(Q,0))$ is generated by the $\Z/2$-folding of the class of the Koszul complex on a regular sequence of
generators of $\fm$, which lies in $K_0(\mf^{\fm}(Q,0))^{(d)}$ by the axioms in Theorem~\ref{introthm3}; 
this would be parallel to what occurs for bounded $\Z$-graded complexes. The proof of Theorem~\ref{introthm5} sketched here would then be almost exactly the same  as 
Gillet and Soul\'e's proof of Serre's Vanishing Conjecture. 

We are not able to prove $K_0(\mf^{\fm}(Q,0))$ is generated by the Koszul complex, and indeed we have come to suspect this might be false (see Example~\ref{koszul}). Fortunately, for
the proof of Dao-Kurano's conjecture, one needs only the weaker property that there is an equality of maps $\chi \circ \cPsi^p = p^d \chi$ from $K_0(\mf^{\fm}(Q,0))$ to $\Z$; we prove this in Theorem \ref{KeyLemma}.

We thank Luchezar Avramov for helpful conversations in preparing this paper, and we thank 
Dave Benson, Oliver Haution, Bernhard K\"ock,   and Paul Roberts 
for leading us to the relevant papers 
\cite{Ben84}, \cite{Haution}, \cite{Kock} and \cite{RobertsUnpub}.

\section{Adams operations on matrix factorizations}
\label{operations}

In this section, we define cyclic Adams operations on matrix factorizations, closely following the construction of cyclic Adams operations on perfect complexes
with support found in Sections 2 and 3 of \cite{brown2016cyclic}. We prove these operations enjoy analogues of many of the key properties of the operations on perfect complexes with support constructed in loc. cit.

\subsection{Construction}
\label{construction}

Let $Q$ be a Noetherian commutative ring, $f \in Q$ any element (including possibly $f = 0$), and $G$ a finite group.
Let $\mf(Q,f;G)$ be the category of {\em $G$-equivariant matrix factorizations}. When $G$ is the trivial group, 
this is the category described in the introduction. More generally, 
an object of $\mf(Q,f;G)$ is an object $P$ of $\mf(Q,f)$ equipped with a $G$-action (i.e., a group
homomorphism $G \to \Aut_{\mf(Q,f)}(P)$), and a morphism is a $G$-equivariant morphism of matrix factorizations. 

The category $\mf(Q,f;G)$ is an exact category, with the notion of exactness given degree-wise in the evident manner. 

\begin{rem} We could equivalently  define an object of $\mf(Q,f;G)$ to consist of a pair of $Q[G]$-modules $P_0, P_1$ that are finitely generated and projective
  as $Q$-modules, together with 
 a pair of morphisms of $Q[G]$-modules, $(\a: P_1 \to P_0, \b: P_0 \to P_1)$, such that $\a \b$ and $\b
  \a$ are each multiplication by $f$ (which is central in $Q[G]$). Moreover, if $|G|$
  is invertible in $Q$, we have $\mf(Q,f;G) = \mf(Q[G],f)$. 
\end{rem}

\begin{ex} If $f = 0$ (and $G$ is trivial), $\mf(Q,0)$ is the category of $\Z/2$-graded complexes of finitely generated projective $Q$-modules, with morphisms
  being chain maps. 
\end{ex}

A {\em homotopy} joining morphisms $g_1, g_2: P \to P'$ in $\mf(Q,f;G)$ is defined just as in the introduction, with the added condition that it be
$G$-equivariant. In detail, it is 
a $Q$-linear, $G$-equivariant map $h: P \to P'$ of degree $1$ such that $d_{P'} h + h d_P = g_1
- g_2$. The {\em homotopy category} of $\mf(Q,f;G)$ is the category $[\mf(Q,f;G)]$ obtained from $\mf(Q,f;G)$ by identifying homotopic morphisms.

Given a ring homomorphism $Q \to Q'$ sending $f$ to $f'$, 
there is an evident functor $\mf(Q,f;G) \to \mf(Q', f';G)$ given by extension of scalars along $Q \to Q'$. 
When $Q' = Q_\fp$ for $\fp \in \Spec(Q)$, we write this functor as $P \mapsto P_\fp$.

For an object $P \in \mf(Q,f;G)$, define the {\em support} of $P$ to be
$$
\supp(P) = \{ \fp \in \Spec(Q) \, | \, \text{$P_\fp$ is not homotopy equivalent to $0$ in $\mf(Q_\fp, f;G)$} \}.
$$
Given a closed subset $Z$ of $\Spec(Q)$, define $\mf^Z(Q,f;G)$ to be the full subcategory of $\mf(Q,f)$ consisting of objects $P$ satisfying $\supp(P) \subseteq
Z$. Note that $\mf^Z(Q,f;G)$ is a full, exact subcategory of $\mf(Q,f;G)$, and $[\mf^Z(Q,f;G)]$ is a full  subcategory of $[\mf(Q,f;G)]$.

We will mainly use the notion of supports for matrix factorizations when $f = 0$ and $G$ is trivial, in which case objects of $\mf(Q,0)$ are ($\Z/2$-graded) complexes. 
One must be careful in this situation not to conflate the notion of being homotopy equivalent to $0$ with being acyclic. 
The former implies the latter, but the latter does not imply the former in general. These conditions are equivalent, however, in the following case:

\begin{lem} \label{lem728d}
If $Q$ is a regular ring, an object $P \in \mf(Q,0)$ is contractible if and only if $H_0(P) = H_1(P) = 0$. 
\end{lem}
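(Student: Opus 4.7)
The ``only if'' direction is immediate, since contractibility passes through the homology functor. So assume $H_0(P) = H_1(P) = 0$, where $P$ is given by $(\alpha : P_1 \to P_0, \beta : P_0 \to P_1)$ with $\alpha\beta = 0 = \beta\alpha$ and each $P_i$ finitely generated projective. Acyclicity says $\im \alpha = \ker \beta$ and $\im \beta = \ker \alpha$, so we get two short exact sequences
$$0 \to \im \beta \to P_1 \xrightarrow{\alpha} \im \alpha \to 0 \quad \text{and} \quad 0 \to \im \alpha \to P_0 \xrightarrow{\beta} \im \beta \to 0.$$
My plan is to show that $\im \alpha$ (hence also $\im \beta$) is a projective $Q$-module; once this is established, both sequences split, giving decompositions $P_1 \cong \im \beta \oplus \im \alpha$ and $P_0 \cong \im \alpha \oplus \im \beta$ with respect to which $P$ becomes the direct sum of two ``trivial'' matrix factorizations of the forms $\im \alpha \xrightarrow{\id} \im \alpha \xrightarrow{0} \im \alpha$ and $\im \beta \xrightarrow{0} \im \beta \xrightarrow{\id} \im \beta$, each of which is visibly contractible.

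The main step, therefore, is to prove $\im \alpha$ is projective, and this is where regularity enters. Splicing the two short exact sequences gives an infinite $2$-periodic projective resolution
$$\cdots \to P_0 \xrightarrow{\beta} P_1 \xrightarrow{\alpha} P_0 \xrightarrow{\beta} P_1 \xrightarrow{\alpha} P_0 \xrightarrow{\beta} P_1 \to \im \alpha \to 0,$$
whose syzygies alternate between $\im \alpha$ and $\im \beta$. Projectivity is local for finitely generated modules over a Noetherian ring, so it suffices to check that $(\im \alpha)_\fp$ is projective over $Q_\fp$ for every prime $\fp$ of $Q$. Since $Q$ is regular, $Q_\fp$ is a regular local ring of finite Krull dimension $d_\fp$, and hence every finitely generated $Q_\fp$-module has projective dimension at most $d_\fp$. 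Thus the $d_\fp$-th syzygy of $(\im \alpha)_\fp$ in the resolution above is projective; by $2$-periodicity this syzygy is itself either $(\im \alpha)_\fp$ or $(\im \beta)_\fp$, and in the latter case we conclude $(\im \alpha)_\fp$ is projective too by splitting of the localized sequence $0 \to (\im \alpha)_\fp \to (P_0)_\fp \to (\im \beta)_\fp \to 0$.

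The principal obstacle is really this one point—that regularity is what prevents the acyclic-but-not-contractible pathologies available in the $\Z/2$-graded world (e.g.\ a $2$-periodic resolution of a non-projective module of infinite projective dimension). Once $\im \alpha$ is known to be projective, the remaining splitting and bookkeeping are routine, and the construction of an explicit contracting homotopy from the direct sum decomposition is immediate.
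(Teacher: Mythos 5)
Your proposal is correct and follows essentially the same route as the paper: both arguments establish that $\im\alpha$ (resp.\ $\im\beta$) is projective by localizing at a prime $\fp$, observing that the $2$-periodic resolution exhibits these modules as arbitrarily high syzygies over the regular local ring $Q_\fp$ of finite global dimension, and then split the resulting short exact sequences to decompose $P$ into contractible pieces. The only cosmetic difference is in the final bookkeeping—you present $P$ as a direct sum of two trivial factorizations, while the paper writes down an explicit isomorphism to a single trivial factorization via block matrices—but the underlying argument is the same.
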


\begin{proof}
Suppose $P=(\alpha_0: P_0 \to P_1, \alpha_1: P_1 \to P_0)$ is acyclic, and
set $M = \ker(\alpha_1) = \im(\alpha_0)$ 
and $N = \ker(\alpha_0) = \im(\alpha_1)$. 
We claim that $M$ and $N$ are projective.
It suffices to prove $M_\mathfrak{p}$ and $N_\mathfrak{p}$ are free for all primes $\mathfrak{p}$. Since
$$
0 \lra M_\fp \lra (P_1)_{\fp} \lra (P_0)_\fp \lra (P_1)_\fp \lra \cdots 
$$
is exact, we see that, for any $d$, $M_\fp$ is a $d$-th syzygy 
of some other $Q_\fp$-module. Taking $d > \dim(Q_\fp)$ gives that $M_\fp$ is free. Similarly, $N$ is projective.

Choose splittings $\pi_0: P_0 \to
N$ and $\pi_1: P_1 \to M$ of the inclusions $N \into P_0$ and $M \into
P_1$. Define $A: P_0 \to N \oplus M$ 
and $B: P_1 \to N \oplus M$ to be given by
$\begin{pmatrix} \pi_0  \\ \alpha_0
  \\ \end{pmatrix}$ and $\begin{pmatrix} \alpha_1 \\   \pi_1
  \\ \end{pmatrix} $, respectively. Set $E:= \begin{pmatrix} 0 & 0 \\ 0 & 1 \end{pmatrix}$ and $F:=\begin{pmatrix} 1 & 0 \\
  0 & 0 \end{pmatrix}$. 

We have the following isomorphism of matrix factorizations
$$
\xymatrix{
P_0 \ar[r]^{\alpha_0} \ar[d]_{A} & P_1 \ar[d]_{B} \ar[r]^{\alpha_1} &
P_0 \ar[d]_{A} \\
N \oplus M \ar[r]^{E} &  N
\oplus M \ar[r]^{F} & N \oplus M \\
}
$$
and the bottom matrix factorization is clearly contractible.

\end{proof}

\begin{rem} 
When $Q$ is regular, $f$ is a non-zero-divisor, and $G$ is trivial, the support of any object of $\mf(Q,f)$ is a subset of 
$$
\Sing(R) := \{ \fp \in \Spec(R) \, | \, \text{$R_\fp$ is not regular}\} 
$$
where $R = Q/(f)$, and where we identify $\Spec R$ with its image in $\Spec Q$. 
Thus, in this case, we have
$$
\mf(Q,f) = \mf^{\Sing(R)}(Q,f).
$$
Eventually, we will be making the additional assumption that $R$ is an isolated singularity, meaning $Q$, and hence $R$, is local, and 
$\Sing(R) = \{\fm\}$.
\end{rem}

Define the Grothendieck group $K_0(\mf^Z(Q,f;G))$ to be the abelian monoid given by isomorphism classes of objects of
$\mf^Z(Q,f;G)$ under the operation of direct sum, modulo the relations $[P] = [P'] + [P'']$ if there exists a short exact sequence $0 \to P' \to P \to P'' \to
0$  and
$[P] = [P']$ if $P$ and $P'$ are homotopy equivalent. As with the $K$-theory of complexes, 
$K_0(\mf^Z(Q,f;G))$ is an abelian group, since $[P] + [\Sigma(P)] = 0$,
where $\Sigma(P)$ denotes the suspension of $P$. 

For $P \in \mf(Q,f;G)$ and $P' \in \mf(Q, f';G')$, the tensor product $P \otimes_Q P'$ is the usual tensor product of $Q$-modules,
with grading determined  by $|p \otimes p'| = |p| + |p'|$ and differential $\del(p \otimes p') = d_P(p) \otimes p' + (-1)^{|p|} p \otimes d_{P'}(p')$. The group
$G \times G'$ acts in the evident manner, and 
the resulting object belongs to $\mf(Q, f + f'; G \times G')$, since $\del^2$ is multiplication by $f + f'$. 
Note, in particular, that the $n$-th tensor power of an object of $\mf(Q,f)$ belongs to $\mf(Q, nf)$.

We proceed to define cyclic Adams operations on $K_0(\mf^Z(Q,f))$. The construction is closely parallel to that for $K_0^Z(Q)$ given in \cite{brown2016cyclic}, 
with one minor exception: the need to ``divide by $p$''.

For an integer $n \geq 1$, we define a functor
$$
T^n: \mf^Z(Q, f) \to \mf^Z(Q, nf; \Sigma_n)
$$
given, on objects, by sending 
$P \in \mf^Z(Q,f)$ to the matrix factorization 
$$
T^n(P) = \overbrace{P \otimes_Q \cdots \otimes_Q P}^{\text{$n$ times}}
$$ 
equipped with the left action of $\Sigma_n$ given by
$$
\sigma(p_1 \otimes \cdots \otimes p_n) = \pm p_{\sigma^{-1}(1)} \otimes \cdots \otimes p_{\sigma^{-1}(n)}.
$$
The sign is uniquely determined by the following rule: if $\sigma$ is the transposition $(i \text{ } i+1)$ for some $1 \le i \le n-1$ and $p_1, \dots, p_n$ are homogenous elements of $P$, then
$$\sigma(p_1 \otimes \cdots \otimes p_n) = (-1)^{|p_i| |p_{i+1}|} p_1 \otimes \cdots p_{i-1} \otimes p_{i+1} \otimes p_i \otimes p_{i+2} \otimes \cdots \otimes p_n.$$

The rule for morphisms is the evident one. 

Following Section 2 of \cite{brown2016cyclic}, for any $i,j$, let $\Sigma_{i,j}$ be the image of the canonical homomorphism $\Sigma_i \times \Sigma_j \into
\Sigma_{i+j}$, and define a pairing
$$
\star_{i,j}: K_0(\mf^Z(Q, if); \Sigma_i) \times
K_0(\mf^Z(Q, jf); \Sigma_j) \to K_0(\mf^Z(Q, (i+j)f); \Sigma_{i+j})
$$
induced by the bi-functor $(P, P') \mapsto Q[\Sigma_{i+j}] \otimes_{Q[\Sigma_{i,j}]} P \otimes_Q P'$. 
This pairing is well-defined, commutative, and associative, by an argument identical to the proof of Lemma 2.4 in
\cite{brown2016cyclic}. 

The proof of Theorem 2.2 in \cite{brown2016cyclic} also holds nearly verbatim for matrix
factorizations and leads to a proof of: 

\begin{thm} \label{thm1030}
For a commutative Noetherian ring $Q$, closed subset $Z$ of $\Spec(Q)$, element $f \in Q$, and integer $n \geq 1$, there is a function
$$
t^n_\Sigma: K_0(\mf^Z(Q,f)) \to K_0(\mf^Z(Q, nf; \Sigma_n)) 
$$
such that, for an object $P \in \mf^Z(Q,f)$, we have
$$
t^n_\Sigma([P]) = [T^n(P)].
$$
\end{thm}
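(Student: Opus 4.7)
The plan is to adapt the proof of \cite[Theorem 2.2]{brown2016cyclic} to the matrix factorization setting. The function $t^n_\Sigma$ is simply $[P] \mapsto [T^n(P)]$; the content of the theorem is that this prescription descends to a well-defined function on $K_0$. As a preliminary reduction, observe that every class in $K_0(\mf^Z(Q,f))$ is represented by a single object: the mapping cone $C(\id_P)$ is a contractible matrix factorization of $f$, and the short exact sequence $0 \to P \to C(\id_P) \to \Sigma P \to 0$ yields $[\Sigma P] = -[P]$, whence any formal difference $[A]-[B]$ equals $[A \oplus \Sigma B]$. The support hypothesis is preserved because $T^n(P)_\fp = T^n(P_\fp)$ and $T^n$ takes contractible objects to contractible objects (by homotopy invariance below).

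The heart of the argument consists of three compatibilities. First, \emph{homotopy invariance}: a homotopy $h$ between morphisms $g_1, g_2: P \to P'$ induces, via a symmetrized Leibniz-type formula, a $\Sigma_n$-equivariant homotopy between $T^n(g_1)$ and $T^n(g_2)$, so $T^n$ respects homotopy equivalence. Second, the \emph{direct sum identity}
$$
T^n(P \oplus P') \cong \bigoplus_{i+j=n} Q[\Sigma_n] \otimes_{Q[\Sigma_{i,j}]}\bigl(T^i(P) \otimes T^j(P')\bigr)
$$
holds in $\mf^Z(Q,nf;\Sigma_n)$ by the standard expansion of an $n$-fold tensor power of a direct sum, giving $[T^n(P \oplus P')] = \sum_{i+j=n} [T^i(P)] \star_{i,j} [T^j(P')]$ in $K_0$. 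Third, for a short exact sequence $0 \to P' \to P \to P'' \to 0$ in $\mf^Z(Q,f)$, construct a $\Sigma_n$-equivariant filtration $0 = F_{-1} \subseteq F_0 \subseteq \cdots \subseteq F_n = T^n(P)$ whose $i$-th subquotient is isomorphic to $Q[\Sigma_n] \otimes_{Q[\Sigma_{n-i,i}]}(T^{n-i}(P') \otimes T^i(P''))$. Because each component of the sequence is a projective $Q$-module, it splits $\Z/2$-degreewise, so $P \cong P' \oplus P''$ as $\Z/2$-graded $Q$-modules (with block-upper-triangular differential relative to this splitting). Take $F_i \subseteq T^n(P)$ to be spanned by those pure tensors having at most $i$ factors from the $P''$ summand; it is preserved by the differential (by block-triangularity) and by the $\Sigma_n$-action (which permutes factors and hence preserves the multiset of $P''$-summands). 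Passing to classes gives
$$
[T^n(P)] = \sum_{i+j=n} [T^i(P')] \star_{i,j} [T^j(P'')].
$$

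Combining the direct sum and filtration identities, whenever $[P] = [P'] + [P'']$ in $K_0(\mf^Z(Q,f))$ via a short exact sequence, $[T^n(P)]$ and $[T^n(P' \oplus P'')]$ both equal the same $\star$-product sum, hence are equal in $K_0(\mf^Z(Q,nf;\Sigma_n))$. Together with homotopy invariance and the reduction above, this forces $[P] \mapsto [T^n(P)]$ to depend only on $[P] \in K_0$. The main obstacle I anticipate is the third step: verifying that the filtration is preserved by the differential of $T^n(P)$ and interacts correctly with the Koszul signs defining the $\Sigma_n$-action, in a setting where $d^2 = nf$ rather than $d^2 = 0$. In the $\Z$-graded perfect complex setting of \cite{brown2016cyclic} this is transparent; here one must carefully track the interplay of the $\Z/2$-grading, the sign conventions in $\sigma(p_1 \otimes \cdots \otimes p_n)$, and the nonzero square of the differential. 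This verification is routine once the block-upper-triangular structure is fixed, which is why the authors assert the proof carries over nearly verbatim.
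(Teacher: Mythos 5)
Your proposal is correct and follows the same approach the paper takes, namely adapting the proof of Theorem 2.2 of \cite{brown2016cyclic}, and you have correctly identified the three compatibilities that must be verified in the matrix factorization setting (homotopy invariance of $T^n$, the Whitney-type direct sum decomposition, and the $\Sigma_n$-equivariant filtration on $T^n$ of a short exact sequence). One small caveat: your closing assertion that these three compatibilities together with the single-object reduction ``force'' well-definedness is a bit quick as stated --- the relations defining $K_0$ are generated in the free abelian group, so passing from the individual compatibilities to well-definedness still requires the total-power-series/$\lambda$-ring bookkeeping (assembling the $T^m$ into a multiplicative power series valued in $\prod_m K_0(\mf^Z(Q,mf;\Sigma_m))$), exactly as in the proof of Theorem 2.2 of \cite{brown2016cyclic} that you are adapting; since that formal step carries over unchanged, this is not an obstacle.
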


\begin{rem} 
As in \cite[\S 5]{brown2016cyclic}, if $k$ is a positive integer such that  $k!$ is invertible in $Q$, then 
one can use Theorem \ref{thm1030} to establish an operation $\lambda^k$ on $K_0(\mf^Z(Q,f))$ that is induced from the $k$-th exterior power functor. Since 
we won't  use such operations in this paper, we omit the details.
\end{rem}

We now assume $p$ is a prime that is invertible in $Q$, and we define $C_p$ to be the subgroup of $\Sigma_p$ generated by the $p$-cycle $(1 \, 2 \, \cdots\, p)$. 
For any $p$-th root of unity $\zeta$ belonging to $Q$ 
(including the case $\zeta = 1$), let $Q_{\zeta}$ 
denote the $Q[C_p]$-module $Q$ equipped with the $C_p$-action $\s q = \zeta q$. 
For $P \in \mf^Z(Q, pf; C_p)$, we define
$$
P^{(\zeta)} := \Hom_{Q[C_p]}(Q_\zeta, P) = 
\ker(\s - \zeta: P \to P).
$$
Since $p$ is invertible and $\zeta$ belongs to $Q$, 
the module $Q_{\zeta}$ is a direct summand of $Q[C_p]$, and so $P \mapsto P^{(\zeta)}$ is an exact functor. It therefore induces a map
$$
\phi^p_\zeta: K_0(\mf^Z(Q, pf; C_p)) 
\xra{[P] \mapsto [P^{(\zeta)}]} 
K_0(\mf^Z(Q, pf)),
$$
and so we may form
the composition 
$$
K_0(\mf^Z(Q,f)) \xra{t^p_{\Sigma}} K_0(\mf^Z(Q, pf; \Sigma_p)) 
\xra{\res}
K_0(\mf^Z(Q, pf; C_p)) 
\xra{\phi^p_\zeta}
K_0(\mf^Z(Q, pf)).
$$ 
We come upon the need to ``divide by $p$''. In general,
if $u \in Q$ is a unit, we
define an auto-equivalence 
$$
\mult_u: \mf^Z(Q, f) \to \mf^Z(Q, uf)
$$
by sending a matrix factorization $(\a, \b)$ to $(\a, u \b)$.
(Its inverse is given by $\mult_{u^{-1}}$.) 
For example, in Section \ref{sec-dao} below, we will employ the functor $\mult_{-1}$, which we will write as $\mult_{-1}(P) = P^\circ$. Here, we use
$\mult_{\frac{1}{p}}$, and we define $t^p_\zeta$ to be the composition
$$
K_0(\mf^Z(Q,f)) \xra{\phi^p_\zeta \circ \res \circ t_\Sigma^p}
K_0(\mf^Z(Q, pf)) \xra{\mult_{\frac{1}{p}}}
K_0(\mf^Z(Q, f)).
$$ 

Let $A_p$ denote the subring of $\C$ given by $\Z[\frac{1}{p}, e^{\frac{2\pi i}{p}}]$.

\begin{defn} \label{def930} Assume $p$ is a prime, $Q$ is a (commutative, Noetherian) $A_p$-algebra, $f$ is any element of $Q$, and $Z$ is a closed subset of
  $\Spec(Q)$. Define
$$
\cPsi^p = \sum_{\zeta} \zeta t^p_\zeta:  K_0(\mf^Z(Q,f)) \to K_0(\mf^Z(Q,f)),
$$
where the sum ranges over all $p$-th roots of unity. (In this formula, the $\zeta$ occurring as a coefficient is interpreted as belonging to $\Z[e^{2\pi i/p}]$
whereas 
the $\zeta$ occurring as a subscript denotes its image in $Q$ under the map $A_p \to Q$.) 
\end{defn}

\begin{rem}
The image of $\cPsi^p$ is, \emph{a priori}, contained in the group 
$K_0(\mf^Z(Q,f)) \otimes_\Z \Z[e^{\frac{2\pi i}{p}}]$. 
But, by an argument identical to the proof of
Corollary 3.5 in \cite{brown2016cyclic}, we have
$$
\sum_\zeta \zeta t^p_\zeta = t_1^p - t_{\zeta'}^p
$$
for any fixed primitive $p$-th root of unity $\zeta'$, and thus the image of $\cPsi^p$ can be taken to be $K_0(\mf^Z(Q,f))$.
\end{rem}

\begin{rem} \label{phiremark}
Setting $\phi^p = \sum_{\zeta} \zeta \phi^p_\zeta$, one gets another formulation: 
$$
\cPsi^p = \mult_{\frac{1}{p}} \circ \phi^p \circ \res \circ t^p_\Sigma.
$$
\end{rem}

\subsection{Axioms for Adams operations on matrix factorizations \`a la Gillet-Soul\'e}
In this subsection, we show the operations $\cPsi^p$ satisfy the following analogues of the axioms of Gillet and Soul\'e (cf. Theorem 3.7 in \cite{brown2016cyclic}):

\begin{thm} \label{psi-GS-axioms-mf}
Assume $p$ is a prime, $Q$ is a (commutative, Noetherian) $A_p$-algebra, 
$f, f_1, f_2$ are any elements of $Q$, and $Z$ is a closed subset of $\Spec(Q)$.
\begin{enumerate}

\item $\cPsi^p$ is a group endomorphism of $K_0(\mf^Z(Q,f))$.

\item For $\a \in K_0(\mf^Z(Q,f_1))$ and  $\b \in K_0(\mf^W(Q,f_2))$,
$$
\cPsi^p(\a \cup \b) = \cPsi^p(\a) \cup \cPsi^p(\b) \in K_0(\mf^{Z \cap W}(Q,f_1 + f_2)),
$$
where $\cup$ is the multiplication rule on Grothendieck groups induced by tensor product. The three operators $\cPsi^p$ in the equation are, from left
to right, acting on $K_0(\mf^{Z\cap W}(Q,f_1+f_2))$, $K_0(\mf^Z(Q,f_1))$, and $K_0(\mf^W(Q,f_2))$.

\item $\cPsi^p$ is functorial in the following sense: Suppose $\rho: Q \to Q'$ is map of $A_p$-algebras, $f' = \rho(f)$, and $\widetilde{\rho}^{-1}(Z) \subseteq Z'$ 
where $\widetilde{\rho} \colon \Spec Q' \to \Spec Q$ is the induced map on spectra.  
Then extension of scalars along $\rho$ induces a map
$K_0(\mf^Z(Q,f)) \to K_0(\mf^{Z'}(Q',f'))$ that commutes with the actions of $\cPsi^p$.

\item 
If $f = gh$, so that $(g,h) := (Q \xra{g} Q,  Q \xra{h} Q)$ is an object of $\mf^{V(g,h)}(Q, f)$, we have
$$
\cPsi^p[(g,h)] = p [(g,h)].
$$

\end{enumerate}
\end{thm}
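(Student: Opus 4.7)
The plan is to adapt, \emph{mutatis mutandis}, the proofs of the analogous axioms for $\cPsi^p$ on $K_0^Z(Q)$ from \cite[Thm.~3.7]{brown2016cyclic}. The new features of the matrix-factorization setting are the $\Z/2$-grading (introducing Koszul signs) and the rescaling functor $\mult_{1/p}$ (needed since $T^p$ sends $\mf(Q,f)$ to $\mf(Q,pf)$). Throughout I use the decomposition $\cPsi^p = \mult_{1/p} \circ \phi^p \circ \res \circ t^p_\Sigma$ from Remark~\ref{phiremark}; the three outer factors are induced by exact functors, so the burden reduces to analyzing how $t^p_\Sigma$ and the eigenspace projections interact with the relevant operations.

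For (1), following \cite[Lem.~2.4]{brown2016cyclic} there is a $\Sigma_p$-equivariant decomposition
\[
T^p(P \oplus P') \cong T^p(P) \oplus T^p(P') \oplus \bigoplus_{0<i<p} Q[\Sigma_p] \otimes_{Q[\Sigma_{i,p-i}]} \bigl(T^i(P) \otimes T^{p-i}(P')\bigr).
\]
Since $p$ is prime and $0 < i < p$, the cyclic group $C_p$ acts freely on $\Sigma_p/\Sigma_{i,p-i}$, so each cross term restricts to a free $Q[C_p]$-module; the eigenspaces of $Q[C_p]$ are canonically rank one over $Q$, and hence $\phi^p = \sum_\zeta \zeta \phi^p_\zeta$ annihilates these cross terms, giving additivity. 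For (2), the natural identification $T^p(P \otimes P') \cong T^p(P) \otimes T^p(P')$ is $C_p$-equivariant for the diagonal action on the right, so $\bigl(T^p(P \otimes P')\bigr)^{(\zeta)} = \bigoplus_{\zeta_1\zeta_2=\zeta} T^p(P)^{(\zeta_1)} \otimes T^p(P')^{(\zeta_2)}$, whence $\phi^p[T^p(P \otimes P')] = \phi^p[T^p(P)] \cdot \phi^p[T^p(P')]$. A direct check shows that the map that is the identity on $X_0 \otimes Y_0$, $X_0 \otimes Y_1$, and $X_1 \otimes Y_0$, and is multiplication by $u$ on $X_1 \otimes Y_1$, defines an isomorphism $\mult_u(X \otimes Y) \cong \mult_u(X) \otimes \mult_u(Y)$; taking $u = 1/p$ reconciles the rescaling factors on the two sides and completes (2). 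Statement (3) is immediate from the naturality of $T^p$, $\res$, $\phi^p_\zeta$, and $\mult_{1/p}$ under base change.

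The main obstacle is (4), which I expect to require an explicit computation on $T^p(g,h)$. Index the standard basis by $\epsilon \in \{0,1\}^p$, with $e_\epsilon := e_{\epsilon_1} \otimes \cdots \otimes e_{\epsilon_p}$ in $\Z/2$-degree $\sum_i \epsilon_i$. The group $C_p$ permutes this basis with Koszul signs; the only fixed orbits are the two singletons $\{e_{0\cdots 0}\}$ and $\{e_{1\cdots 1}\}$ (both carrying the trivial character when $p$ is odd, since $(-1)^{p-1} = 1$), while every other orbit has size $p$ and spans, after a basis change absorbing the Koszul signs, a free $Q[C_p]$-module. By the Galois symmetry of roots of unity (cf.\ \cite[Cor.~3.5]{brown2016cyclic}), the class $b := [\mult_{1/p}(T^p(g,h)^{(\zeta)})] \in K_0(\mf(Q,f))$ is independent of the choice of primitive $p$-th root $\zeta$, and using $\sum_{\zeta \neq 1}\zeta = -1$ one has
\[
\cPsi^p[(g,h)] = \bigl[\mult_{1/p}\bigl(T^p(g,h)^{(1)}\bigr)\bigr] - b.
\]
Choosing bases that diagonalize the restricted differential on each eigenspace, I would compute this difference directly and show it equals $p\,[(g,h)]$. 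In the cases $p = 2$ and $p = 3$ this is straightforward: one finds on the nose that $[\mult_{1/p}(T^p(g,h)^{(1)})] = (p-1)[(g,h)]$ and $b = [\Sigma(g,h)] = -[(g,h)]$, yielding $(p-1)[(g,h)] - (-[(g,h)]) = p[(g,h)]$.

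The principal technical difficulty in (4) is the careful bookkeeping of Koszul signs in the differential on $T^p(g,h)$ and the identification of each rescaled eigenspace in $K_0(\mf(Q,f))$; the combinatorics becomes more intricate for larger primes, but the qualitative features—two singleton orbits, freeness of every other $C_p$-orbit, and Galois symmetry of the nontrivial eigenspaces—persist and reduce the verification to computing a single pair of eigenspaces.
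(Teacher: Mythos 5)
Your treatment of parts (1)--(3) matches the paper's approach, which simply cites the proofs of the corresponding parts of \cite[Thm.~3.7]{brown2016cyclic}; your sketches correctly show how those arguments transfer to the $\Z/2$-graded setting, including the observation that the identity on $X_0\otimes Y_0$, $X_0\otimes Y_1$, $X_1\otimes Y_0$ and multiplication by $u$ on $X_1\otimes Y_1$ gives an isomorphism $\mult_u(X\otimes Y)\cong\mult_u(X)\otimes\mult_u(Y)$, which reconciles the rescalings in (2).

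Part (4), however, is not proved. You set up an orbit analysis of the basis $\{e_\epsilon\}_{\epsilon\in\{0,1\}^p}$, verify the claim only for $p=2$ and $p=3$, and then write that you ``would compute this difference directly'' in general while conceding the ``combinatorics becomes more intricate for larger primes.'' That is a plan, not a proof: knowing the $C_p$-orbit structure of the basis does not yet give you the class of each rescaled eigenspace in $K_0(\mf(Q,f))$, because the differential $T^p(d)$ mixes orbits of different Hamming weight, and untangling that mixing is precisely the step you defer. The paper sidesteps the orbit computation entirely by exhibiting a $C_p$-equivariant isomorphism
\[
T^p\bigl((g,h)\bigr) \;\cong\; (g,ph)\otimes_Q(0,0)\otimes_Q\cdots\otimes_Q(0,0)
\]
with $p-1$ copies of $(0,0):=(Q\xra{0}Q,\,Q\xra{0}Q)$, where $\sigma$ acts on the right by $\zeta^{i-1}$ on the odd part of the $i$-th $(0,0)$-factor and trivially everywhere else. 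The isomorphism is the discrete Fourier transform $e_i\mapsto\tfrac{1}{p}\sum_j\zeta^{ij}e_j$ on a rank-$p$ free module $V$, after identifying the underlying modules of both sides with $\bigwedge V$. Since $\phi^p$ is multiplicative (the matrix-factorization analogue of \cite[Lem.~3.11]{brown2016cyclic}), acts as the identity on the $(g,ph)$-factor (which carries the trivial $C_p$-action), and gives $\phi^p([(0,0)])=[I]+\zeta^{i}[\Sigma I]=(1-\zeta^{i})[I]$ on the $(i+1)$-st factor (here $I$ is the unit matrix factorization, $Q$ concentrated in even degree), one obtains
\[
\cPsi^p\bigl[(g,h)\bigr]=\mult_{1/p}\bigl([(g,ph)]\bigr)\cdot\prod_{i=1}^{p-1}(1-\zeta^{i})=p\,\bigl[(g,h)\bigr],
\]
using $\prod_{i=1}^{p-1}(1-\zeta^{i})=p$. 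This is uniform in $p$ and should replace the orbit-by-orbit computation you only sketch.
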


\begin{proof}
The proofs of (1)--(3) are essentially identical to the proofs of parts (1)--(3) of Theorem 3.7 in \cite{brown2016cyclic}.
As for (4), let $(0,0)$ denote the matrix factorization $(Q \xra{0} Q, Q \xra{0} Q)$ of 0, and let $X$ denote the tensor product 
$$
(g, ph) \otimes_Q (0,0) \otimes_Q \cdots \otimes_Q (0,0).
$$

Set $\zeta:=e^{\frac{2 \pi i}{p}}$ and $\sigma:=(1 \, 2 \, \cdots \, p) \in C_p$. 
We equip $X$ with a $C_p$ action by letting $\sigma$ act on the $i$-th factor of $X$ in the following way: if $x$ has odd degree, $\sigma \cdot x = \zeta^{i-1}x$; if $x$ has even degree, $\sigma \cdot x = x$.

We claim that there is an isomorphism
$$
T^p([g,h]) \cong (g, ph) \otimes_Q (0,0) \otimes_Q \cdots \otimes_Q (0,0) 
$$
in $\mf^{V(g, h)}(Q, pf; C_p)$. To prove the claim, let $V$ be a free $Q$-module of rank $p$ 
with a fixed basis $\{e_0, \dots, e_{p-1}\}$.
We identify the underlying $Q$-modules of $T^p((g,h))$
and $X$ with the exterior algebra 
$\bigwedge V$ of $V$; under this identification, the action of $C_p$ on $T^p((g,h))$ is given by 
$$
\sigma (e_{i_1} \wedge \cdots \wedge e_{i_n} )=  e_{\sigma^{-1}(i_1)} \wedge \cdots \wedge e_{\sigma^{-1}(i_n)}, 
$$
and the action of $C_p$ on $X$ is given by
$$
\sigma (e_{i_1} \wedge \cdots \wedge e_{i_n} )= \zeta^{i_1 +
\cdots + i_n} e_{i_1} \wedge \cdots \wedge e_{i_n}.
$$

For $0 \le i \le p-1$, define $v_i:=\frac{1}{p} \sum_j \zeta^{ij}
e_j$. Then $v_0, \dots,  v_{p-1}$ form a basis of $V$. Let $\alpha: \bigwedge
V \to \bigwedge V$ denote the
$Q$-algebra automorphism given by $e_i \mapsto v_i$. 
Then $\alpha$ yields an
isomorphism $T^p((g,h)) \xra{\cong} X$ of $C_p$-equivariant matrix
factorizations; this proves the claim. 

(In checking the details here, it is useful to note the following: the
``differential'' on $T^p((g,h))$ is given by $s_0 + s_1$, where $s_0$
is left-multiplication by $h(e_0 + \cdots +
e_{p-1})$, and $s_1$ is given by the Koszul differential on the sequence
$(g, g, \dots, g)$. Similarly, the ``differential'' on $X$ is given
by $t_0 + t_1$, where $t_0$ is left-multiplication by $phe_0$ and
$t_1$ is given by the Koszul differential on the sequence $(g, 0,
\dots, 0).$)

By Remark~\ref{phiremark}, and the result analogous to 
Lemma 3.11 of \cite{brown2016cyclic} for matrix factorizations (with essentially the same proof), we have 
$$
\cPsi^p ([(g,h)]) 
= \mult_{\frac{1}{p}} \! \left(  \phi^p ([(g,ph)]) \cup \phi^p ([(0,0)]) \cup \cdots \cup \phi^p ([(0,0)])\right). \\
$$
$\phi^p$ acts as the identity on the first factor, which is equipped with the trivial action of $C_p$. 
Furthermore, direct calculation on the $(i+1)$-st factor yields  
$$
\phi^p([(0,0]) =  [I] + \zeta^{i} [\Sigma I] = (1-\zeta^{i}) [I]
$$
where $I$ denotes the unit matrix factorization $(0 \xra{0} Q,Q \xra{0} 0)$. 
Thus, one obtains 
$$
\cPsi^p ([(g,h)]) 
=  \mult_{\frac{1}{p}} \! \left( [(g,ph)] \cup [I] \cup \cdots \cup [I] \right) 
\prod_{i=1}^{p-1} (1 - \zeta^{i})
= p [(g,h)],
$$
since $\prod_{i=1}^{p-1} (1 - \zeta^{i}) = p$.
\end{proof}

\begin{cor} \label{cor728}
If $a = (a_1, \dots, a_n)$ is a 
sequence of elements in an $A_p$-algebra $Q$, 
and $K(a)$ is the associated $\Z/2$-folded Koszul complex, 
regarded as an object of $\mf^{V(a_1, \dots, a_n)}(Q, 0)$, then
$$
\cPsi^p([K(a)]) = p^n [K(a)] \in K_0(\mf^{V(a_1, \dots, a_n)}(Q, 0)).
$$
\end{cor}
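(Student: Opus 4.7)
The plan is to reduce to axiom (4) of Theorem \ref{psi-GS-axioms-mf} by writing the folded Koszul complex as a tensor product of one-variable Koszul complexes, and then to use the multiplicativity of $\cPsi^p$ (axiom (2) of the same theorem).

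First I would observe that, for each $i$, the matrix factorization
$(a_i, 0) := \bigl(Q \xrightarrow{a_i} Q,\, Q \xrightarrow{0} Q\bigr)$
is a matrix factorization of $0 = a_i \cdot 0$, and it is the $\Z/2$-folding of the one-variable Koszul complex $0 \to Q \xrightarrow{a_i} Q \to 0$. Its support is contained in $V(a_i)$, since it is easy to write down a contracting homotopy over $Q_\fp$ whenever $a_i$ is a unit at $\fp$. The fundamental identification is then
$$
K(a) \;\cong\; (a_1, 0) \otimes_Q (a_2, 0) \otimes_Q \cdots \otimes_Q (a_n, 0)
$$
in $\mf^{V(a_1, \dots, a_n)}(Q, 0)$, which is just the standard fact that the Koszul complex on a sequence is the tensor product of the Koszul complexes on its entries, folded $\Z/2$-gradedly. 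The tensor product lies in $\mf(Q,0)$ because $0 + \cdots + 0 = 0$, and its support lies in $\bigcap_i V(a_i) = V(a_1, \dots, a_n)$.

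Next I would apply axiom (4) of Theorem \ref{psi-GS-axioms-mf}, with $g = a_i$ and $h = 0$, to each factor, yielding
$$
\cPsi^p\bigl([(a_i, 0)]\bigr) \;=\; p\,[(a_i, 0)] \quad \text{in } K_0\!\left(\mf^{V(a_i)}(Q,0)\right).
$$
Then I would apply axiom (2), repeatedly or by a trivial induction on $n$, using that $\cup$ is the pairing induced by tensor product of matrix factorizations, to compute
$$
\cPsi^p([K(a)]) \;=\; \cPsi^p\bigl([(a_1,0)]\bigr) \cup \cdots \cup \cPsi^p\bigl([(a_n,0)]\bigr)
\;=\; p^n\, [(a_1,0)] \cup \cdots \cup [(a_n,0)] \;=\; p^n\, [K(a)],
$$
living in $K_0(\mf^{V(a_1,\dots,a_n)}(Q, 0))$ as claimed.

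There is really no main obstacle: both nontrivial ingredients (multiplicativity and the axiom for rank-one matrix factorizations) are already established in Theorem \ref{psi-GS-axioms-mf}, and the only small thing to verify is the identification of $K(a)$ with the iterated tensor product in the category $\mf^{V(a_1, \dots, a_n)}(Q, 0)$, which is routine from the definition of the $\Z/2$-graded tensor product.
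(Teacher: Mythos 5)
Your proposal is correct and matches the paper's own (terser) argument: decompose $K(a)$ as the tensor product of the rank-one folded Koszul factorizations $(a_i,0)$, apply axiom (4) to each factor, and conclude via the multiplicativity axiom (2). The paper states this in one line; you have supplied exactly the verifications it implicitly relies on.
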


\begin{proof}
This follows from parts (2) and (4) of the Theorem, because 
$K(a)$ is the tensor product of the matrix factorizations $(a_i,0)$ and $\Z/2$-folding commutes with tensor product. 

\end{proof}

\subsection{Diagonalizability}

Suppose $Q$ is a regular ring and $f \in Q$ is a non-zero-divisor. Recall, from the introduction, that $\cP^{V(f)}(Q)$ denotes the category of bounded complexes of finitely generated and projective $Q$-modules whose homology is supported on $V(f)$, and $K_0^{V(f)}(Q)$ denotes its Grothendieck group. In this subsection, we construct a surjection
$$\rho_f: K_0^{V(f)}(Q) \onto K_0(\mf(Q,f))$$
that commutes with the actions of $\cPsi^p$. Using this, and Corollary 3.12 of \cite{brown2016cyclic} 
(the proof of which is really due to Gillet-Soul\'e), we deduce that the action of 
$\cPsi^p$ on $K_0(\mf(Q,f))_\Q$ decomposes the latter into eigenspaces of the expected weights.

Let $K_f$ denote the Koszul dga associated to $f$, so that, as a $Q$-algebra, $K_f = Q[\e]/(\e^2)$ with $|\e| = 1$, and it is equipped with the $Q$-linear differential $d$
satisfying $d(\e) = f$. Let $P(K_f/Q)$ denote the full subcategory of the category of dg-$K_f$-modules consisting of those that 
are finitely generated and projective as $Q$-modules. An object of
$P(K_f/Q)$ is thus a bounded complex $P$ of finitely generated projective $Q$-modules equipped with a degree one $Q$-linear map $s: P_\cdot \to P_{\cdot + 1}$
satisfying 
$d_P s + s d_P = f$ and $s^2 = 0$. (The map $s$ is given by multiplication by $\e$.) A morphism from $(P, d_P, s)$ to $(P',d_{P'},s')$ is a chain map $g$ such that
$g s = s' g$. A homotopy from $g_1$ to $g_2$ is a degree one map $h$ such that $d_{P'}h + hd_P = g_1 - g_2$ and $hs = s'h$.

There are functors
$$
\cP^{V(f)}(Q) \xla{F} P(K_f/Q) \xra{\Fold} \mf(Q,f),
$$
where $F$ is the forgetful functor that sends $(P,d_P, s)$ to $(P, d_P)$, and $\Fold$ sends $(P, d, s)$ to the following matrix factorization:
the even degree part is $\bigoplus_i P_{2i}$, the odd degree part is $\bigoplus_{i} P_{2i+1}$ and the degree one
endomorphism is $\del := d + s$.

Define $K_0(P(K_f/Q))$ to be the Grothendieck group of objects modulo relations coming from short exact sequences and homotopy equivalences as usual.

\begin{lem} If $f$ is a non-zero-divisor in a regular ring $Q$, 
the functor $F$ induces an isomorphism
$$
K_0(P(K_f/Q)) \xra{\cong} K_0^{V(f)}(Q).
$$
\end{lem}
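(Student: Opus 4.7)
My plan has three steps: verify that $F$ descends to $K_0$, construct an inverse $G$ using dg-$K_f$-resolutions of $R$-modules, and check that $F$ and $G$ are mutually inverse.

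First, I would check that $F$ is well-defined. Given $(P,d,s) \in P(K_f/Q)$, the identity $ds+sd = f \cdot \id_P$ exhibits $s$ as a null-homotopy of multiplication by $f$ on $(P,d)$, so $f$ annihilates $H_*(P)$ and hence $\supp H_*(P) \subseteq V(f)$. Therefore $F$ takes values in $\cP^{V(f)}(Q)$. The functor is exact (sequences in $P(K_f/Q)$ are short exact precisely when they are short exact as sequences of $Q$-complexes) and preserves homotopy equivalences, so it induces a homomorphism on $K_0$.

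Next, I would construct an inverse $G: K_0^{V(f)}(Q) \to K_0(P(K_f/Q))$. Regularity of $Q$ combined with standard dévissage identifies $K_0^{V(f)}(Q)$ with $G_0(R)$, via the map sending a finitely generated $R$-module $M$ (viewed as a $Q$-module through $Q \onto R$) to $[\tilde P] \in K_0^{V(f)}(Q)$ for any finite $Q$-projective resolution $\tilde P \to M$; this reduces the problem to defining $G$ on generators $[M]$ for $R$-modules $M$. For each $M$, I would choose a $Q$-projective resolution $\tilde P \to M$ and equip it with a degree-one map $s$ satisfying $ds+sd = f$ and $s^2 = 0$, producing an object of $P(K_f/Q)$. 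The first equation for $s$ can be solved because $fM = 0$ forces $f \cdot \id_{\tilde P}$ to be null-homotopic; the second equation is the subtle point addressed below. Having built such a lift, I would define $G([M]) := [\tilde P]$, and verify well-definedness of $G$ (independence of the choice of $\tilde P$ and compatibility with short exact sequences of $R$-modules) by the usual homotopy comparison of resolutions.

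Finally, I would verify that $F$ and $G$ are mutually inverse. The composition $F \circ G$ sends $[M]$ to the class in $K_0^{V(f)}(Q)$ of a $Q$-projective resolution of $M$, which is $[M]$ by construction, so $F \circ G = \id$. For $G \circ F = \id$, I would induct on the length of $(P,d,s) \in P(K_f/Q)$ as a $Q$-complex, using short exact sequences in $P(K_f/Q)$ coming from canonical truncations to reduce to the case where $P$ is concentrated in a single degree (where the claim is immediate from the dévissage identification). The main obstacle I anticipate is ensuring the existence of a $Q$-projective resolution $\tilde P \to M$ admitting a degree-one $s$ with both $ds+sd = f$ and $s^2=0$; the difficulty is that the obstruction $s_0^2$ for an arbitrary null-homotopy $s_0$ of $f$ need not vanish. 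I would circumvent this by building $\tilde P$ via a $K_f$-semifree resolution of $M$ and truncating at a syzygy of order $\geq \dim Q$ (which is automatically $Q$-projective by regularity), thereby producing a bounded $Q$-perfect dg-$K_f$-module on the nose rather than attempting to perturb $s_0$ on a pre-chosen resolution.
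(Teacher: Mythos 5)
The core idea of constructing the map the other way via $K_f$-semifree resolutions, truncated at a syzygy of order $\geq \dim Q$, is exactly the paper's mechanism, and your handling of the $s^2 = 0$ obstruction is correct. The check $F \circ G = \id$ also works. The gap is in your verification that $G \circ F = \id$.

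You propose to induct on the length of $(P,d,s)$ using short exact sequences in $P(K_f/Q)$ coming from canonical truncations, reducing to a single degree. Two things break. First, the base case is vacuous: a dg-$K_f$-module concentrated in one degree has $d = 0$ and $s = 0$, so $ds + sd = 0$; but $ds+sd$ must equal $f \cdot \id_P$, and since $f$ is a non-zero-divisor and $P$ is projective, this forces $P = 0$. So there is no nontrivial object to anchor the induction. Second, and more seriously, the canonical truncation $\tau_{\geq n} P$, while it \emph{is} a sub-dg-$K_f$-module of $P$ (one checks $(ds+sd)|_{\ker d_n} = f$ using $d|_{\ker d_n} = 0$), has $\ker(d_n)$ in its bottom degree, and this need not be a projective $Q$-module. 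So the resulting short exact sequence leaves the category $P(K_f/Q)$, and the inductive step does not produce relations in $K_0(P(K_f/Q))$. (Brutal truncations do not help either: $s$ and $d$ move in opposite directions, so no brutal truncation is simultaneously $d$- and $s$-stable.) The paper sidesteps this entirely: rather than directly verifying $G \circ F = \id$, it invokes Quillen's resolution theorem to show that the inclusion of $P(K_f/Q)$ into the category modeling $D^b(K_f)$ induces an isomorphism on $K_0$, then closes a commutative triangle with $G_0(R)$ at the apex. To repair your argument you would need a genuinely different device for the injectivity of $F$ — for instance, the resolution-theorem argument of the paper, or an argument showing any $(P,d,s)$ is connected by a zig-zag of quasi-isomorphisms in $P(K_f/Q)$ to one in the image of $G$.
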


\begin{proof} 
Let $R = Q/(f)$. One has an evident quasi-isomorphism $K_f \xra{\sim} R$ of dga's, and hence an equivalence of triangulated categories $D^b(R) \xrightarrow{\cong} D^b(K_f)$
induced by restriction of scalars. Thus, one has an isomorphism
$$
G_0(R) = K_0(D^b(R)) \xrightarrow{\cong} K_0(D^b(K_f)).
$$
We may model $D^b(K_f)$ by semi-projective $K_f$-modules with finitely generated homology. 
Since
$Q$ is regular, the good truncation of such a complex in sufficiently high degree is a complex of projective $Q$-modules. It thus follows from Quillen's
resolution theorem that the
inclusion map determines an isomorphism 
$$
K_0(P(K_f/Q)) \xra{\cong} K_0(D^b(K_f)).
$$
We thus obtain an isomorphism
$G_0(R) \xra{\cong} K_0(P(K_f/Q))$, which we can describe explicitly as follows:  
if $M$ is a finitely generated $R$-module, form a (possibly infinite) $K_f$-semi-projective resolution $P \xra{\sim} M$ of $M$. Then the map sends $[M]$ to
$[P']$ where $P'$ is a good truncation
of $P$ in sufficiently high degree.

We also have the more classical isomorphism $G_0(R) \xra{\cong} K_0^{V(f)}(Q)$, sending $[M]$ to the class of a $Q$-projective resolution of $M$. Since the complex $P'$
constructed above is an example of such a resolution, it is clear that 
the triangle 
$$
\xymatrix{
K_0(P(K_f/Q)) \ar[rr]^-{F} && K_0^{V(f)}(Q) \\
& G_0(R) \ar[ul]^\cong \ar[ur]^\cong 
}
$$
commutes.
\end{proof}

The functor $\Fold$ induces a map from 
$K_0(P(K_f/Q))$ to $K_0(\mf(Q,f))$, 
and thus, using the lemma, we obtain the desired map
$\rho_f: K_0^{V(f)}(Q) \to K_0(\mf(Q,f))$. 
Explicitly, the construction shows that if an object 
$P \in \cP^{V(f)}(Q)$ admits a degree one map $s$ satisfying $ds + sd = f$ and
$s^2 = 0$, then $\rho_f([P]) = [\Fold(P,d,s)]$.  
In particular, the map $\rho_f$ is surjective, since for a matrix factorization 
$(\alpha: P_1 \to P_0, \beta: P_0 \to P_1) \in \mf(Q,f)$, we have 
$(\alpha, \beta) = \Fold(P, \alpha, \beta)$. 

Since there exists an isomorphism $G_0(Q/(f)) \xrightarrow{\cong} K_0^{V(f)}(Q)$ which sends the class of a finitely generated $Q/(f)$-module to the class of a chosen $Q$-projective resolution of it,
we obtain a surjective map
$$
G_0(Q/(f)) \onto K_0(\mf(Q,f)).
$$
Note that this surjection agrees with the one induced by the inverse of the equivalence $[\mf(Q,f)] \xrightarrow{\cong} D_{\sing}(Q/(f))$ from
Theorem~\ref{introthm1} of the introduction.

Given a finitely generated $Q/(f)$-module $M$, let $[M]_\stable \in K_0(\mf(Q,f))$ denote the image of $[M]$ under the above surjection
$G_0(Q/(f)) \onto K_0(\mf(Q,f))$.
Explicitly, for such an $M$, one may find a $Q$-projective resolution $(P, d)$ of it for which
there exists a degree one endomorphism $s$ of $P$ satisfying $ds + sd = f$ and $s^2 = 0$ (by taking, for instance, as above, a good truncation in sufficiently high degree of 
a $K_f$-semi-projective resolution $P \xra{\sim} M$). 
Then $[M]_\stable = [\Fold(P, d, s)]$.

We will use the following result to deduce the diagonalizability 
of $\cPsi^p$ on the Grothendieck group of matrix factorizations 
from the corresponding result for complexes. 

\begin{prop} \label{prop727}
Assume $Q$ is a regular $A_p$-algebra and $f \in Q$ is a non-zero-divisor.
The map $\rho_f$ commutes with the Adams operations $\cPsi^p$.
\end{prop}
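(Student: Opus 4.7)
The plan is to decompose $\cPsi^p$ into its four basic constituents $t^p_\Sigma$, $\res$, $\phi^p_\zeta$, and $\mult_{1/p}$, and to verify that the folding functor is compatible with each of them at the level of $K_0$. Via the isomorphism $F \colon K_0(P(K_f/Q)) \xra{\cong} K_0^{V(f)}(Q)$ of the preceding lemma, it suffices to check that the induced map $\Fold \colon K_0(P(K_f/Q)) \to K_0(\mf(Q, f))$ intertwines the two Adams operations. Pulled back along $F$, the cyclic Adams operation on $K_0^{V(f)}(Q)$ becomes $\mult_{1/p} \circ \sum_\zeta \zeta\, \phi^p_\zeta \circ \res \circ t^p_\Sigma$ on $K_0(P(K_f/Q))$, where $\mult_{1/p} \colon K_0(P(K_{pf}/Q)) \to K_0(P(K_f/Q))$ serves to identify these two groups (which are both canonically isomorphic to $K_0^{V(f)}(Q) = K_0^{V(pf)}(Q)$ via $F$).

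Compatibility of $\Fold$ with the first three functors is straightforward. For $(P, d, s) \in P(K_f/Q)$, the tensor power $T^p(P, d, s) \in P(K_{pf}/Q)$ folds to $T^p(\Fold(P, d, s)) \in \mf(Q, pf)$ with matching $\Sigma_p$-equivariant structure; this reflects the fact that folding commutes with tensor products. The operations $\res$ and $\phi^p_\zeta = \Hom_{Q[C_p]}(Q_\zeta, -)$ are purely $Q$-linear and so commute with $\Fold$ on the nose.

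The principal obstacle is the compatibility of $\Fold$ with $\mult_{1/p}$. As functors $P(K_{pf}/Q) \to \mf(Q, f)$, $\Fold \circ \mult_{1/p}$ and $\mult_{1/p} \circ \Fold$ are not equal: the former yields the matrix factorization with differential $d + s/p$, scaling $s$ in both directions, whereas the latter scales only the $\beta$ component of $\Fold(P, d, s)$. However, the two resulting matrix factorizations are isomorphic in $\mf(Q, f)$: the required isomorphism is the $Q$-linear automorphism of $\bigoplus_i P_i$ that acts on $P_i$ by multiplication by $p^{\lfloor i/2 \rfloor}$, which is an isomorphism since $p$ is a unit in $Q$. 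A direct check shows that this automorphism intertwines the two differentials. Combining the four compatibilities then yields the desired equality $\cPsi^p \circ \rho_f = \rho_f \circ \cPsi^p$.
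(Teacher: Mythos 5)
Your argument is correct and follows essentially the same route as the paper's: the compatibilities with $t^p_\Sigma$, $\res$, and $\phi^p_\zeta$ amount to the paper's observation that $T^p(\Fold(P,d,s)) \cong \Fold(T^p(P),T^p(d),T^p(s))$ equivariantly, and your $\mult_{1/p}$-compatibility (the automorphism acting by $p^{\lfloor i/2\rfloor}$ on $P_i$) is precisely the folded form of the isomorphism of complexes used in the paper's Lemma~\ref{lem730}, which rescales $d_i$ and $s_i$ by $u$ in even degrees. The only cosmetic difference is that you phrase the rescaling step as a natural isomorphism of functors $P(K_{pf}/Q)\to\mf(Q,f)$ after folding, whereas the paper packages it as a separate commuting triangle before folding.
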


\begin{proof} 
We need to show the diagram
$$
\xymatrix{
K_0^{V(f)}(Q) \ar[r]^-{\rho_f} \ar[d]^{\cPsi^p} & K_0(\mf(Q,f)) \ar[d]^{[Y] \mapsto [T^p(Y)^{(1)}] - [T^p(Y)^{(\zeta)}]}  \\
K_0^{V(f)}(Q) \ar[r]^-{\rho_{pf}} \ar[d]^= & K_0(\mf(Q,pf)) \ar[d]^{\mult_{\frac{1}{p}}}  \\
K_0^{V(f)}(Q) \ar[r]^-{\rho_{f}} & K_0(\mf(Q,f))   \\
}
$$
commutes.

It suffices to check the commutativity of the top square on classes $[P]$ for which there exists an $s$ with $ds + sd = f$ and $s^2 = 0$.
Recall that the induced differential $T^p(d)$ on $T^p(P)$ is given by 
$$
T^p(d)(x_1 \otimes \cdots \otimes x_p) = 
\sum_{i=1}^p (-1)^{|x_1| + \cdots + |x_{i-1}|} x_1 \otimes \cdots \otimes d(x_i) \otimes \cdots \otimes x_p,
$$
and we define $T^p(s)$ to be the degree one map given by the same formula with $s$ in place of $d$. 
Then $T^p(d) T^p(s) + T^p(s) T^p(d) = pf$ and $T^p(s)^2 = 0$.  Moreover,
it follows from the definitions that there is a canonical isomorphism
$$
T^p(\Fold(P,d,s)) \cong \Fold(T^p(P),T^p(d), T^p(s)) \in \mf(Q, pf),
$$
and this isomorphism is equivariant for the action of $\Sigma_p$. The commutativity of the top square in the diagram follows.

The bottom square commutes by the more general lemma below.
\end{proof}

\begin{lem} \label{lem730}
If $Q$ is a regular, 
$f \in Q$ is a non-zero-divisor,  and $u \in Q$ is a unit, 
the triangle
$$
\xymatrix{
&& K_0(\mf(Q,f)) \ar[dd]^{\mult_u} \\
K_0^{V(f)}(Q) \ar@{->>}[rru]^{\rho_f} \ar@{->>}[rrd]^{\rho_{uf}} & \\
  && K_0(\mf(Q,uf)) 
}
$$
commutes.
\end{lem}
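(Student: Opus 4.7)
The plan is to reduce the identity $\mult_u \circ \rho_f = \rho_{uf}$ to a verification on classes that admit a Koszul-style lift. By the lemma just preceding, every class in $K_0^{V(f)}(Q)$ is (a $\Z$-linear combination of classes of) the form $[P]$ for some $(P,d,s) \in P(K_f/Q)$, that is, a bounded complex of finitely generated projective $Q$-modules with homology on $V(f)$ equipped with a degree one $Q$-linear $s$ satisfying $ds + sd = f$ and $s^2 = 0$. By linearity, I need only check the identity on such classes, and for such a $P$ the definition gives $\rho_f([P]) = [\Fold(P,d,s)]$ and hence $\mult_u(\rho_f([P])) = [\mult_u\Fold(P,d,s)]$.

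Next, because $u$ is a unit, $V(uf) = V(f)$, so $[P]$ also represents a class in $K_0^{V(uf)}(Q)$. The rescaled map $s' := us$ satisfies $ds' + s'd = u(ds+sd) = uf$ and $(s')^2 = u^2 s^2 = 0$, so $(P,d,us)$ is an object of $P(K_{uf}/Q)$ lifting the same class $[P]$. Therefore $\rho_{uf}([P]) = [\Fold(P,d,us)]$. Thus the whole problem is to produce an isomorphism
\[
\Fold(P,d,us) \ \cong \ \mult_u\!\Fold(P,d,s) \quad \text{in } \mf(Q,uf).
\]

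Both sides have the same underlying $\Z/2$-graded module; they differ only in how $u$ is distributed between the pieces of the differential. I will define $\Phi \colon \bigoplus_j P_j \to \bigoplus_j P_j$ to be multiplication by the scalar $u^{-i}$ on both $P_{2i}$ and $P_{2i+1}$. Since $u$ is a unit and $P$ is bounded, $\Phi$ is an automorphism of the underlying $\Z/2$-graded module. A direct check comparing the two relevant squares shows $\Phi$ intertwines the two differentials: on the $P_{2i}$-summand the even-to-odd component of $\mult_u\Fold(P,d,s)$ is $u(d+s)$ while that of $\Fold(P,d,us)$ is $d+us$, and the chosen rescaling absorbs precisely one factor of $u$ when $d$ crosses the even/odd boundary (changing $u^{-i}$ to $u^{-(i-1)}$) while leaving $s$ unchanged (since the scalar is the same on $P_{2i}$ and $P_{2i+1}$); symmetrically on the odd-to-even component. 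The matching constraints $\lambda_{2i+1} = \lambda_{2i}$, $\lambda_{2i-1} = u\lambda_{2i}$, $\lambda_{2i+1} = u\lambda_{2i+2}$ are all satisfied by $\lambda_j = u^{-\lfloor j/2 \rfloor}$.

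The only real obstacle is the bookkeeping in choosing the scalars $\lambda_j$ so that both commuting squares hold simultaneously while keeping track of which direction $d$ and $s$ each go relative to the $\Z/2$-grading; once the ansatz $\lambda_{2i} = \lambda_{2i+1} = u^{-i}$ is written down, the verification is a matter of matching coefficients on each $P_j$, which follows from $u$ being a unit in $Q$ and $P$ being bounded.
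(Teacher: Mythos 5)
Your proof is correct and takes essentially the same approach as the paper: reduce to classes admitting a Koszul lift $(P,d,s)$, then compare the two folds via a rescaling by powers of $u$. The only cosmetic difference is that the paper bakes the rescaling into a new $(P',d',s')$ (with $d',s'$ scaled by $u$ on even-degree terms) so that $\Fold(P',d',s')$ literally equals $\mult_u\Fold(P,d,s)$, whereas you keep $d$ fixed, take $s' = us$, and supply the explicit matrix-factorization isomorphism $\Phi$ with $\lambda_j = u^{-\lfloor j/2 \rfloor}$ — the same scalars the paper implicitly uses to identify $(P,d)\cong(P',d')$.
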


\begin{proof} 
Again, it suffices to check the commutativity of the diagram on
classes $[P]$ such that $P$ is a complex with differential $d$ for which there exists an $s$ with $ds + sd = f$ and $s^2 = 0$. 
If $[P]$ is such a class, $\rho_f([P])= [\Fold(P, d,s)]$. 

Before applying $\rho_{uf}$, first replace $(P,d)$ 
by the isomorphic complex $(P',d')$ 
with $P'_i = P_i$ for all $i$ 
and with $d'_i = d_i$ for $i$ odd and $d'_i = u d_i$ for $i$ even. 
Defining $s'$ as $s'_i = s_i$ for $i$ odd and 
$s'_i = us_i$ for $i$ even, one has $d's'+s'd' = uf$. 
Then $\rho_{uf}([P]) = [\Fold(P,' d', s')] 
= \mult_u ([\Fold(P, d,s)]) = (\mult_u \circ \rho_f)([P])$.
\end{proof}

\begin{thm}\label{psi-eigenspaces} Assume $Q$ is a regular $A_p$-algebra of dimension $d$ and $f \in Q$ is a non-zero-divisor.
There is a decomposition
$$
K_0(\mf(Q,f))_\Q
= 
\bigoplus_{i =1}^d  K_0(\mf(Q,f))_\Q^{(i)},
$$
which is independent of $p$, 
such that $\cPsi^p$ acts on $K_0(\mf(Q,f))_\Q^{(i)}$ as multiplication by $p^i$. 
Moreover, for a finitely generated $Q/(f)$-module $M$, we have
$$
[M]_\stable \ 
\in 
\bigoplus_{i=\codim_{Q/(f)} M +1}^d  K_0(\mf(Q,f))_\Q^{(i)}.
$$
\end{thm}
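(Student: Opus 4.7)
The plan is to reduce everything to the analogous weight decomposition on $K_0^{V(f)}(Q)_\Q$ via the surjection $\rho_f$ of Proposition~\ref{prop727}. First, apply Corollary 3.12 of \cite{brown2016cyclic} with $Z = V(f)$: since $f$ is a non-zero-divisor in the regular ring $Q$, $V(f)$ has codimension $1$, so one gets
$$
K_0^{V(f)}(Q)_\Q \;=\; \bigoplus_{i=1}^d K_0^{V(f)}(Q)_\Q^{(i)},
$$
where $\cPsi^p$ acts on the $i$-th summand as multiplication by $p^i$. This Gillet--Soul\'e style decomposition is intrinsic (it is the associated graded of the $\gamma$-filtration), hence independent of the prime $p$.

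Next, I would define
$$
K_0(\mf(Q,f))_\Q^{(i)} \;:=\; \rho_f\!\left(K_0^{V(f)}(Q)_\Q^{(i)}\right).
$$
By Proposition~\ref{prop727}, $\rho_f\otimes\Q$ intertwines the $\cPsi^p$-actions, so each $K_0(\mf(Q,f))_\Q^{(i)}$ is contained in the $p^i$-eigenspace of $\cPsi^p$ on $K_0(\mf(Q,f))_\Q$. Since the eigenvalues $p, p^2, \dots, p^d$ are pairwise distinct, the sum of these subspaces is automatically direct. Surjectivity of $\rho_f$ (established in the paragraph preceding Proposition~\ref{prop727}) then forces this direct sum to exhaust $K_0(\mf(Q,f))_\Q$, giving the desired decomposition. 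Independence of the prime $p$ is inherited from the source: the subspace $\rho_f(K_0^{V(f)}(Q)_\Q^{(i)})$ is defined without reference to any particular $p$.

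For the refined statement about $[M]_\stable$, set $c := \codim_{Q/(f)} M$ and $Z := \supp_Q M \subseteq V(f)$, so that $\codim_Q Z = c+1$. Because $Q$ is regular, $M$ admits a finite $Q$-projective resolution $P_\bullet$, whose class lies in $K_0^Z(Q)$; by Corollary 3.12 of \cite{brown2016cyclic} applied to $Z$,
$$
[P_\bullet] \;\in\; \bigoplus_{i=c+1}^{d} K_0^Z(Q)_\Q^{(i)}.
$$
The natural map $K_0^Z(Q)\to K_0^{V(f)}(Q)$ commutes with $\cPsi^p$ and preserves the weight grading piece by piece, and the construction of $\rho_f$ via a folded $K_f$-semi-projective resolution gives $\rho_f([P_\bullet]) = [M]_\stable$. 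Composing, the class $[M]_\stable$ lies in $\bigoplus_{i=c+1}^d K_0(\mf(Q,f))_\Q^{(i)}$, as required.

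The main obstacle is purely bookkeeping: confirming the codimension shift $c \mapsto c+1$ when passing from $\Spec(Q/(f))$ to $V(f)\subseteq \Spec Q$, and verifying that the choice of $Q$-projective resolution used to compute $[M]_\stable$ is the same one detected by the weight grading of $K_0^Z(Q)_\Q$. Once Proposition~\ref{prop727} and the analogue of Corollary 3.12 of \cite{brown2016cyclic} are in hand, the rest is formal linear algebra of eigenspaces under an equivariant surjection.
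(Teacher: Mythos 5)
Your proof is correct and follows the same approach as the paper: defining $K_0(\mf(Q,f))_\Q^{(i)}$ as the image of $K_0^{V(f)}(Q)_\Q^{(i)}$ under $\rho_f \otimes \Q$, and applying Corollary 3.12 of \cite{brown2016cyclic} together with Proposition~\ref{prop727}. The paper's published proof is a single sentence citing these two ingredients; you have simply spelled out the linear algebra (distinctness of eigenvalues $\Rightarrow$ direct sum; surjectivity $\Rightarrow$ exhaustion) and the codimension bookkeeping that the paper leaves implicit.
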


\begin{proof} 
This follows from Corollary 3.12 of \cite{brown2016cyclic} and Proposition \ref{prop727} by defining 
$K_0(\mf(Q,f))_\Q^{(i)}$ to be the image of  $K_0^{V(f)}(Q)_\Q^{(i)}$ under $\rho_f \otimes \Q$. 
\end{proof}

We close this subsection with a technical result needed below.

\begin{cor} \label{cor727c}
If $Q$ is a regular $A_p$-algebra for a prime $p$, $f \in Q$ is a non-zero-divisor,  and $u \in Q$ is a unit, 
we have an equality of maps $\cPsi^p \circ \mult_u = \mult_u \circ \cPsi^p$ from
  $K_0(\mf(Q,f))$ to $K_0(\mf(Q,uf))$. 
\end{cor}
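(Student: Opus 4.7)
The plan is to exploit surjectivity of $\rho_f$ to reduce the statement on $K_0(\mf(Q,f))$ to compatibilities already established on $K_0^{V(f)}(Q)$. Specifically, Proposition \ref{prop727} gives $\rho_g \circ \cPsi^p = \cPsi^p \circ \rho_g$ for any non-zero-divisor $g$ (applied here with $g = f$ and $g = uf$), while Lemma \ref{lem730} gives $\mult_u \circ \rho_f = \rho_{uf}$.

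Given $x \in K_0(\mf(Q,f))$, use the surjectivity of $\rho_f$ (noted just after its construction) to choose $y \in K_0^{V(f)}(Q)$ with $\rho_f(y) = x$. I would then compute both compositions as follows. On the one hand,
\begin{equation*}
\mult_u(\cPsi^p(x)) = \mult_u(\cPsi^p(\rho_f(y))) = \mult_u(\rho_f(\cPsi^p(y))) = \rho_{uf}(\cPsi^p(y)),
\end{equation*}
where the second equality uses Proposition \ref{prop727} for $f$ and the third uses Lemma \ref{lem730}. On the other hand,
\begin{equation*}
\cPsi^p(\mult_u(x)) = \cPsi^p(\mult_u(\rho_f(y))) = \cPsi^p(\rho_{uf}(y)) = \rho_{uf}(\cPsi^p(y)),
\end{equation*}
where the second equality uses Lemma \ref{lem730} and the third uses Proposition \ref{prop727} for $uf$. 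Comparing gives the claim.

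There is essentially no obstacle: the corollary is a formal consequence of the two compatibilities proved immediately above. The only small thing to double-check is that Proposition \ref{prop727} is legitimately applicable to both $f$ and $uf$, which it is since both are non-zero-divisors in the regular $A_p$-algebra $Q$, and that the $\cPsi^p$ appearing on $K_0^{V(f)}(Q)$ is the same operator in both applications (it depends only on the ring $Q$ and the support $V(f) = V(uf)$, not on the element cutting out that support).
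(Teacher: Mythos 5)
Your proposal is correct and follows exactly the route the paper takes: the paper's own proof just says that the two diagonal surjections $\rho_f$ and $\rho_{uf}$ in the triangle of Lemma~\ref{lem730} each commute with $\cPsi^p$ by Proposition~\ref{prop727}, and your computation spells out precisely that diagram chase, including the (correct) observation that the $\cPsi^p$ on $K_0^{V(f)}(Q)$ is independent of which defining element of the closed subset one uses.
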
 

\begin{proof} By Proposition \ref{prop727}, the diagonal maps in the commutative diagram of Lemma \ref{lem730} commute with the action of $\cPsi^p$, and these maps are surjective. 
\end{proof}

\section{Dao-Kurano's Conjecture}
\label{DK}

In this section, we apply the results of Section~\ref{operations} to
give a proof of Theorem~\ref{introthm5} from the introduction. 

\subsection{Some properties of $\Z/2$-graded complexes}
\label{properties}

We will need some general results about $\Z/2$-graded complexes. Much of what we need holds in great generality, and so we start by working over a
Noetherian commutative ring $B$. 

Let $\LF(B,0)$ denote the abelian category of all $\Z/2$-graded complexes of $B$-modules (``LF'' stands for ``linear
factorization''), and let $\lf(B,0)$ denote the full subcategory of $\LF(B,0)$ consisting of complexes whose components are finitely generated $B$-modules. An object of $\LF(B,0)$ consists of a pair of $B$-modules, $M^0$ and $M^1$, together with maps $d^0: M^0 \to M^1$ and $d^1: M^1 \to M^0$ such that $d^1 \circ d^0 = 0 = d^0 \circ d^1$. 
Morphisms are given by the evident
$\Z/2$-graded analogues of chain maps. We also have the evident $\Z/2$-versions of quasi-isomorphisms and homotopies of chain maps.  
For objects $X, Y \in \LF(B,0)$, let $\HomLF(X,Y)$ denote the $\Z/2$-analogue of the mapping complex construction. So $\HomLF(X,Y) \in \LF(B,0)$ with
$\HomLF(X,Y)^\e = \bigoplus_{\e' + \e'' = \e} \Hom_B(X^{\e'}, Y^{\e''})$. Note that the zero cycles in $\HomLF(X,Y)$ are, by definition, the set of morphisms from
$X$ to $Y$ in $\LF(B,0)$, and $H^0 \HomLF(X,Y)$ is the set of morphisms
modulo homotopy.

We write $X \otimes_{\LF} Y \in \LF(B,0)$ 
for the evident $\Z/2$-graded analogue of
the tensor product of complexes, so that
$$
(X \otimes_{\LF} Y)^\e = \bigoplus_{\e = \e' + \e''} X^{\e'}
  \otimes_B Y^{\e''}.
$$
We will also need the notion of the totalization $\Tot(X_\cdot)$ of a bounded complex
$$
X_\cdot := (0 \to X_m \to \cdots \to X_0 \to 0)
$$
of objects of $\LF(B,0)$, defined in a manner similar to the $\Z$-graded
setting.
In more detail, we have 
$$
\Tot(X_\cdot)^\e = \bigoplus_{i = 0}^m X_i^{i +\e},
$$
with superscripts taken modulo $2$.  Moreover, if 
$$
0 \to X_m \to \cdots \to X_0 \to M \to 0
$$
is an exact sequence in $\LF(B,0)$, then there is 
a natural quasi-isomorphism 
$$
\Tot(X_\cdot) \xra{\sim} M 
$$
in $\LF(B,0)$.

For $M \in \LF(B,0)$, define 
$Z(M)$ to be the $\Z/2$-graded module consisting of the kernels 
of the two maps comprising the complex $M$, and define 
$B(M)$ to be the $\Z/2$-graded module given by the images 
of the two maps comprising $M$. Let $\Hlf(M)$ denote the 
$\Z/2$-graded module consisting of the homology modules of $M$. Each of $B$, $Z$, and $H$ can be 
interpreted as a functor from $\LF(B,0)$ to
itself, and they restrict to functors from $\lf(B,0)$ to itself. Note that $B(M) \subseteq Z(M)$ and $\Hlf(M) = Z(M)/B(M)$. 

Recall that $\mf(B,0)$ is the full subcategory of $\lf(B,0)$ consisting of complexes whose components are projective $B$-modules. 

\begin{defn} \label{defCE} 
An object $X \in \mf(B,0)$ is called {\em proper} if $Z(X)$, $B(X)$ 
and $\Hlf(X)$ are all projective $R$-modules.

For $M \in \lf(B,0)$, an exact sequence of the form 
$$
\dots \to X_m \to \cdots \to X_1 \to X_0 \to M \to 0
$$
such that $X_i \in \mf(B,0)$ is proper for all $i$ and each of the induced sequences 
$$
\dots \to B(X_m) \to \cdots  \to B(X_1) \to B(X_0) \to B(M) \to 0,
$$
$$
\dots \to Z(X_m) \to \cdots \to Z(X_1) \to Z(X_0) \to Z(M) \to 0,
$$
and
$$
\dots \to \Hlf(X_m) \to \cdots \to \Hlf(X_1) \to \Hlf(X_0) \to \Hlf(M) \to 0
$$
is also exact is called a {\em Cartan-Eilenberg resolution} of $M$. Such a resolution is {\em bounded} if $X_j = 0$ for all $j \gg 0$. 
\end{defn}


\begin{lem} \label{lem730b} If $B$ is a Noetherian commutative ring, and at least one of $X, Y \in \mf(B,0)$ is proper, then there is a natural isomorphism
$$
\Hlf(X) \otimes_{\LF} \Hlf(Y) \xra{\cong}
\Hlf(X \otimes_{\LF} Y).
$$
\end{lem}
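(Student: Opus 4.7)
The plan is to reduce to the case where $X$ has zero differential by showing, under the properness hypothesis on $X$, that $X$ is homotopy equivalent in $\mf(B,0)$ to $\Hlf(X)$ (viewed as a $\Z/2$-graded complex with zero differential). Once this reduction is achieved, the conclusion is immediate: since $\Hlf(X)$ is $B$-projective, the functor $\Hlf(X) \otimes_{\LF} -$ is exact on $\LF(B,0)$, so it commutes with taking homology, yielding
$$\Hlf(X \otimes_{\LF} Y) \cong \Hlf(\Hlf(X) \otimes_{\LF} Y) \cong \Hlf(X) \otimes_{\LF} \Hlf(Y),$$
and one verifies that this isomorphism agrees with the natural Künneth map $[x] \otimes [y] \mapsto [x \otimes y]$.

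The core of the argument is to split $X$ in $\mf(B,0)$ as a direct sum of $\Hlf(X)$ and a contractible complex. Properness provides two short exact sequences of $\Z/2$-graded $B$-modules:
$$0 \to B(X) \to Z(X) \to \Hlf(X) \to 0$$
and
$$0 \to Z(X) \to X \xrightarrow{d} \Sigma B(X) \to 0,$$
where the second sequence has the differential of $X$ as its right-hand map, viewed as a degree-zero surjection onto its image (the suspension $\Sigma$ accounts for the degree shift built into the differential). Since $\Hlf(X)$ and $\Sigma B(X)$ are projective by hypothesis, both sequences split as $\Z/2$-graded modules. Choosing splittings produces a graded isomorphism $X \cong \Hlf(X) \oplus B(X) \oplus \Sigma B(X)$ under which the differential of $X$ vanishes on the $\Hlf(X)$-summand and restricts to the evident contractible differential on $B(X) \oplus \Sigma B(X)$. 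Thus $X$ decomposes in $\mf(B,0)$ as $\Hlf(X) \oplus C$ with $C$ contractible, giving the desired homotopy equivalence.

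The main obstacle will be the careful bookkeeping of $\Z/2$-graded signs and degrees when identifying the differential of $X$ under the above decomposition, and then verifying that the resulting homotopy equivalence $X \simeq \Hlf(X)$ induces, after applying $- \otimes_{\LF} Y$ and taking homology, precisely the natural Künneth map rather than merely some abstract isomorphism. Once the decomposition is pinned down, naturality follows because the splittings may be chosen to be compatible with the tautological inclusion $Z(X) \hookrightarrow X$ and the quotient $Z(X) \twoheadrightarrow \Hlf(X)$, which are what define the Künneth map on cycle classes.
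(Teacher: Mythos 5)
Your argument is correct. The paper disposes of this lemma by citing the ``classical K\"unneth Theorem'' and leaving the details to the reader; what you've done is spell out a clean and explicit proof. Your route is a mild variant of the usual textbook argument: the standard proof tensors the short exact sequence $0 \to Z(X) \to X \to \Sigma B(X) \to 0$ with $Y$ (using flatness of $B(X)$ to keep it exact), passes to the long exact sequence in homology, and then feeds in $0 \to B(X) \to Z(X) \to \Hlf(X) \to 0$ and flatness of $\Hlf(X)$ to conclude. You instead exploit the full projectivity hypothesis built into ``proper'' to split both sequences, obtaining a direct-sum decomposition of $X$ in $\mf(B,0)$ as $\Hlf(X)$ (with zero differential) plus a contractible summand $B(X) \oplus \Sigma B(X)$. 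This makes the reduction to the zero-differential case transparent and avoids the long-exact-sequence bookkeeping. Both are legitimate readings of the paper's one-line reference, and your version is arguably the more economical one given that ``proper'' already hands you projectivity rather than mere flatness.

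One small point worth tightening: the claim that your abstract isomorphism agrees with the natural K\"unneth map deserves a sentence more precision than ``naturality follows.'' The actual content is that the inclusion $\iota: \Hlf(X) \hookrightarrow X$ of the direct summand sends each homology class $[x]$ to a particular representative cycle, and that for any cycle $y \in Z(Y)$ the class of $\iota([x]) \otimes y$ in $\Hlf(X \otimes_{\LF} Y)$ is unchanged if $\iota([x])$ is replaced by any homologous cycle $x$, since the difference is a boundary tensored with a cycle. Once stated this way the verification is immediate, but it is the substantive step rather than an appeal to naturality of a map depending on chosen splittings.
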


\begin{proof} 
The proof is the same as for the classical K\"unneth Theorem. 
\end{proof}

\begin{lem}  \label{lemCE} If $B$ is a Noetherian commutative ring, then 
every $M \in \lf(B,0)$ admits a Cartan-Eilenberg resolution. If $B$ is regular,  every $M \in \lf(B,0)$ admits a bounded Cartan-Eilenberg resolution. 
\end{lem}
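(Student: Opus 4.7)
The plan is to mimic the classical construction of a Cartan--Eilenberg resolution from the $\Z$-graded setting, performing the bookkeeping carefully in the $\Z/2$-graded context. For $M \in \lf(B,0)$, I would start from the two short exact sequences of (underlying) $\Z/2$-graded $B$-modules,
$$0 \to B(M) \to Z(M) \to \Hlf(M) \to 0 \quad \text{and} \quad 0 \to Z(M) \to M \to B(M)[1] \to 0,$$
where $B(M)[1]$ denotes the shift that swaps the two $\Z/2$-components; the second sequence simply encodes that $d^{\e}:M^{\e} \to M^{\e+1}$ has image $B(M)^{\e+1}$ and kernel $Z(M)^{\e}$. Because $B$ is Noetherian and each $M^{\e}$ is finitely generated, all six modules $B(M)^{\e}, Z(M)^{\e}, \Hlf(M)^{\e}$ are finitely generated.

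Next, for each $\e \in \Z/2$, I would choose projective resolutions $P^{B,\e}_\bullet \to B(M)^{\e}$ and $P^{H,\e}_\bullet \to \Hlf(M)^{\e}$ by finitely generated projective $B$-modules. Applying the horseshoe lemma to the first short exact sequence gives a projective resolution of $Z(M)^{\e}$ with $i$-th term $P^{B,\e}_i \oplus P^{H,\e}_i$; applying it a second time to the second short exact sequence yields a projective resolution of $M^{\e}$ whose $i$-th term is $P^{B,\e}_i \oplus P^{H,\e}_i \oplus P^{B,\e+1}_i$. I would then define $X_i \in \mf(B,0)$ by setting $X_i^{\e} := P^{B,\e}_i \oplus P^{H,\e}_i \oplus P^{B,\e+1}_i$, with $\Z/2$-differential projecting onto the third summand and including it identically as the first summand of $X_i^{\e+1}$. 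This makes $B(X_i)^{\e} = P^{B,\e}_i$, $Z(X_i)^{\e} = P^{B,\e}_i \oplus P^{H,\e}_i$, and $\Hlf(X_i)^{\e} = P^{H,\e}_i$, all projective, so each $X_i$ is proper. The vertical transition maps furnished by the horseshoe construction assemble into morphisms $X_i \to X_{i-1}$ in $\lf(B,0)$, and by construction their restrictions to $B, Z$, and $\Hlf$ recover the chosen resolutions of $B(M), Z(M), \Hlf(M)$, yielding the three exactness conditions.

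For the bounded statement, when $B$ is regular (with finite Krull dimension, as in the local situations of interest), every finitely generated $B$-module has finite projective dimension bounded by $\dim B$. The four auxiliary resolutions $P^{B,\e}_\bullet, P^{H,\e}_\bullet$ can then be chosen of finite length, which forces $X_i = 0$ for $i \gg 0$. The main obstacle is the routine but somewhat delicate bookkeeping needed to verify that the horseshoe-produced transition maps are simultaneously compatible with the $\Z/2$-differential on each $X_i$, the augmentation to $M$, and the three induced resolutions of $B(M), Z(M), \Hlf(M)$; once one checks that the block $\partial^B$ appearing in the resolution of $B(M)^{\e+1}$ is the same map used in both horseshoe applications, everything fits together and the verification runs in parallel with the classical case.
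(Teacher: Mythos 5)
Your proposal is correct and follows essentially the same approach as the paper's (terse) proof: choose projective resolutions of $B^\e(M)$ and $\Hlf^\e(M)$, apply the Horseshoe Lemma twice exactly as in the classical $\Z$-graded Cartan--Eilenberg construction, and observe that regularity lets all four auxiliary resolutions be taken finite. Your added check that the differential $d^\e$ commutes with the horseshoe transition maps — because $d^\e$ only sees the third block, on which the transition map is the fixed $\partial^B$ from the resolution of $B(M)^{\e+1}$ used in both horseshoe applications — is the right point to verify and is correctly resolved.
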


\begin{proof} Choose projective resolutions of $B^0(M)$, $B^1(M)$, $\Hlf^0(M)$  and $\Hlf^1(M)$, and make repeated use of the Horseshoe Lemma, just as in the proof of the classical version of this result. If $B$ is regular, all of the chosen projective resolutions in the proof may be chosen to be bounded. 
\end{proof}

Recall that $[\mf(B,0)]$ denotes the category with the same objects as 
$\mf(B,0)$ and with morphism sets given by 
$\Hom_{[\mf(B,0)]}(X,Y) := H^0(\HomLF(X,Y))$.  
We write $\cD(\lf(B,0))$ for the category obtained from $\lf(B,0)$ 
by inverting all quasi-isomorphisms.

\begin{prop}\label{prop726a}
If $B$ is regular, the canonical functor
$$[\mf(B,0)] \xra{\cong} \cD(\lf(B,0))$$
is an equivalence.
\end{prop}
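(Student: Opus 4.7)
The plan is to prove the functor is essentially surjective and fully faithful, in both cases leveraging the bounded Cartan--Eilenberg resolutions supplied by Lemma \ref{lemCE}.

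For essential surjectivity, given $M \in \lf(B,0)$, I would apply Lemma \ref{lemCE} to choose a bounded Cartan--Eilenberg resolution $0 \to X_m \to \cdots \to X_0 \to M \to 0$ with each $X_i$ a proper object of $\mf(B,0)$. The totalization $\Tot(X_\cdot)$ then lies in $\mf(B,0)$, since each of its two graded components is a finite direct sum of finitely generated projective $B$-modules, and the canonical augmentation $\Tot(X_\cdot) \xra{\sim} M$ is a quasi-isomorphism by the remark about $\Tot$ preceding Definition \ref{defCE}.

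For full faithfulness, the crux is to show that every $X \in \mf(B,0)$ is ``K-projective'' in the sense that $\HomLF(X,-)$ carries quasi-isomorphisms in $\LF(B,0)$ to quasi-isomorphisms, equivalently sends acyclics to acyclics. I would argue this in three steps. Step (i): if $X \in \mf(B,0)$ is proper, then $\Hlf(X)$ and $B(X)$ are projective, so the short exact sequences $0 \to B(X) \to Z(X) \to \Hlf(X) \to 0$ and $0 \to Z(X) \to X \to \Sigma B(X) \to 0$ split, yielding a decomposition $X \cong \Hlf(X) \oplus C$ in $\LF(B,0)$ with $C$ contractible and $\Hlf(X)$ a complex with projective components and zero differential; both summands are manifestly K-projective. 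Step (ii): a bounded totalization of proper objects is K-projective, by an induction on the length of the resolution using mapping cones and the two-out-of-three property for K-projectives in triangles. Step (iii): for an arbitrary $X \in \mf(B,0)$, build a bounded Cartan--Eilenberg resolution $X_\cdot \to X$ via Lemma \ref{lemCE}; the quasi-isomorphism $\Tot(X_\cdot) \xra{\sim} X$ has mapping cone lying in $\mf(B,0)$ (as the cone of a morphism between two objects of $\mf(B,0)$) and is acyclic, so by Lemma \ref{lem728d} the regularity of $B$ forces this cone to be contractible. Hence $\Tot(X_\cdot) \simeq X$ in $[\mf(B,0)]$ and $X$ inherits K-projectivity from $\Tot(X_\cdot)$.

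Once K-projectivity is in hand, the desired identification $\Hom_{[\mf(B,0)]}(X,Y) = H^0\HomLF(X,Y) \cong \Hom_{\cD(\lf(B,0))}(X,Y)$ follows from the standard calculus-of-fractions computation in $\cD(\lf(B,0))$: any roof $X \xla{\sim} X' \to Y$ can be inverted since K-projectivity of $X$ lifts any quasi-isomorphism $X' \to X$ up to homotopy. The main obstacle I anticipate is Step (iii): in the $\Z/2$-graded setting a complex with projective components is \emph{not} automatically K-projective (unlike a bounded $\Z$-graded complex of projectives), so one genuinely needs both the bounded Cartan--Eilenberg resolution and Lemma \ref{lem728d} --- and hence the regularity of $B$ --- in an essential way.
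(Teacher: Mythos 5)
Your argument is correct and, for essential surjectivity, identical to the paper's: choose a bounded Cartan--Eilenberg resolution via Lemma \ref{lemCE} and totalize. For full faithfulness the paper merely writes that it ``follows from Lemma \ref{lem728d}''; your K-projectivity argument is a valid and complete unpacking of that one-line assertion (one could trim Steps (i)--(ii) and argue directly that, by essential surjectivity plus Lemma \ref{lem728d}, any quasi-isomorphism between objects of $\mf(B,0)$ has contractible cone and is hence a homotopy equivalence, after which the calculus-of-fractions computation of $\Hom$-sets goes through), and your closing remark about why regularity is essential --- acyclic non-contractible objects of $\mf(B,0)$ would obstruct K-projectivity --- correctly pinpoints the role of Lemma \ref{lem728d}.
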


\begin{proof}
Let $M$ be an object in $\cD(\lf(B,0))$. Applying Lemma~\ref{lemCE}, choose a bounded Cartan Eilenberg resolution $X_\cdot$ of $M$. Then the canonical map $\Tot(X_\cdot) \to M$ is a quasi-isomorphism, and $\Tot(X_\cdot)$ is an object of $\mf(B,0)$; thus, the functor is essentially surjective. Fully faithfulness follows from Lemma~\ref{lem728d}.
\end{proof}

We are especially interested in complexes with finite length homology. 
Let $\lffl(B,0)$ and $\mffl(B,0)$ denote the full subcategories of $\lf(B,0)$ and $\mf(B,0)$ consisting of those complexes $M$ such that $H^0(M)$ and $H^1(M)$ are
finite length $B$-modules. Since this condition is preserved by quasi-isomorphism, we may form $[\mffl(B,0)]$ and $\cD(\lffl(B,0))$, and they may be
identified as full subcategories  of $[\mf(B,0)]$ and $\cD(\lf(B,0))$. Moreover, it follows from Proposition \ref{prop726a}
that the canonical functor induces an
equivalence
$$
[\mffl(B,0)] \xra{\cong} \cD(\lffl(B,0)),
$$
provided $B$ is regular.

It will be convenient to give an alternative description of the category $\LF(B,0)$ and of the constructions just described.
Fix a degree two indeterminate $t$ and form the $\Z$-graded algebra $\tB := B[t, t^{-1}]$, which we regard as a dg-ring with trivial differential. 
Recall that a dg-$\tB$-module is a graded $\tB$-module $M$ equipped with a degree one $\tB$-linear map $d: M \to M$ such that  $d^2 = 0$. 
Since $t$ is a degree two invertible element, a dg-$\tB$-module is the
same things as a $\Z$-graded complex of $B$-modules $M$ together with
a specified isomorphism $t: M \xra{\cong} M[2]$ of complexes.
A morphism between two such pairs, say from $(M,t)$ to $(M', t')$, is a chain map  from $M$ to $M'$ 
that commutes with $t$, $t'$. 
There is an evident equivalence of abelian categories
$$
\operatorname{dg}\text{-$\tB$-}\operatorname{Mod} \xra{\cong} \LF(B,0)
$$
that sends a dg-$\tB$-module $M$ to  the object $(M^0 \xra{d} M^1 \xra{t^{-1}d} M^0)$ of $\LF(B,0)$.  
Moreover, the notions of mapping complex, tensor product, quasi-isomorphism, homotopy equivalence and totalization 
defined above for $\LF(B,0)$ correspond to the standard
notions for  dg-modules. 
This equivalence thus allows us to employ standard results from differential graded algebra, as found, for example, in \cite{AFH}.

\subsection{Adams operations on $\Z/2$-graded complexes with finite length homology}
\label{technical}
Let $Q$ be a regular local ring with maximal ideal $\fm$. Recall that $\mf^{\fm}(Q,0)$ is the category of $\Z/2$-graded complexes of finite rank free $Q$-modules whose homology has support in $\{\fm\}$; notice that $\mf^{\fm}(Q,0) = \mffl(Q,0)$, where the right-hand side is as defined in Section~\ref{properties}.

Recall that $K_0^{\fm}(Q)$ is 
the Grothendieck group of the category of bounded $\Z$-graded complexes of projective $Q$-modules whose homology has support in $\{\fm\}$. It is easy to prove that 
$K_0^{\fm}(Q)$ is a free abelian group of rank one, generated by the class of the Koszul complex on a regular system of
generators of $\fm$. One might thus expect the answer to the following question to be positive:

\begin{quest} \label{quest726} For a regular local ring $(Q, \fm)$, is $K_0(\mf^\fm(Q,0))$ 
a free abelian group of rank one, generated by the $\Z/2$-folded Koszul complex?
\end{quest}

We know the answer to be ``yes'' if $\dm(Q) \leq 2$, but the general situation remains unknown. 
The following example illustrates the difficulty:

\begin{ex}\label{koszul} Let $(Q, \fm)$ be a regular local ring of dimension three, and suppose $x,y,z$ 
form a regular sequence of generators for the maximal ideal $\fm$. Let
$$
0 \to Q \xra{i} Q^3 \xra{A} Q^3 \xra{p} Q \to 0
$$
be the usual Koszul complex on $x,y,z$ 
(so that, for example, $p$ is given by the row matrix $(x, y, z)$). 
The $\Z/2$-folding of this Koszul complex, 
$$
K := \left(Q^3 \oplus Q \xra{\tiny{\begin{bmatrix}  A & 0 \\ 0 & 0   \end{bmatrix}}} Q^3 \oplus Q
\xra{\tiny{\begin{bmatrix}  0 & i \\ p & 0   \end{bmatrix}}} Q^3 \oplus Q \right),
$$
determines a class $[K]$ in $K_0(\mf^\fm(Q,0))$. 

Now define $B: Q^3 \to Q^3$ to be the map $i \circ p$. 
Then  $AB = 0 = BA$, so that $X = (Q^3 \xra{A} Q^3 \xra{B} Q^3)$ is a $\Z/2$-graded complex. Moreover, $\ker(B) = \im(A)$ and $\ker(A)/\im(B) \cong Q/\fm$, so that
$X \in \mf^{\fm}(Q,0)$. We do not know whether $[X]$ is a multiple of $[K]$ in $K_0(\mf^\fm(Q,0))$.
\end{ex}

To explain the relevance of Question \ref{quest726}, let us define the {\em Euler characteristic} of an object $X \in \mf^{\fm}(Q,0)$ to be
$$
\chi(X) = \len H^0(X) - \len H^1(X).
$$
Then $\chi$ determines a group homomorphism
$$
\chi: K_0(\mf^\fm(Q,0)) \to \Z.
$$
For example, if $K$ is the $\Z/2$-folded Koszul complex on a regular system of generators for $\fm$, then $\chi(K) = 1$.  
Assume now that $Q$ is a regular local $A_p$-algebra for a prime $p$ (that is, assume $p$ is invertible in $Q$ and that $Q$ contains a primitive $p$-th root of unity), so that the cyclic Adams operation $\cPsi^p$ acts on
$K_0(\mf^{\fm}(Q,0))$. We have $\cPsi^p([K]) = p^d [K]$, where $d = \dm(Q)$, by Corollary \ref{cor728}.  
If the answer to Question \ref{quest726} were
affirmative, we would obtain as an immediate consequence the identity 
\begin{equation} \label{E62}
\chi \circ \cPsi^p = p^d \chi
\end{equation}
of maps from $K_0(\mf^\fm(Q,0))$ to $\Z$.
Moreover, this equation plays a key role in the proof of Theorem \ref{introthm5}.

Although we are unable to answer Question \ref{quest726},
we are nevertheless able to prove:

\begin{thm} \label{KeyLemma} For a regular local ring $Q$ of dimension
  $d$ that is an $A_p$-algebra for some prime $p$, equation
  \eqref{E62} holds. 
\end{thm}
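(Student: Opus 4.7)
The plan is to reduce the theorem to the analogous identity on $K_0^\fm(Q)$ via the folding construction, using that every class in $K_0(\mf^\fm(Q,0))$ has the same homology as some folded class.

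\textbf{Step 1 (folding commutes with $\cPsi^p$).} I would first establish a version of Proposition \ref{prop727} for $f = 0$: the folding construction yields a group homomorphism $\rho_0 \colon K_0^\fm(Q) \to K_0(\mf^\fm(Q,0))$, sending a bounded complex of finite rank projectives with finite length homology to its $\Z/2$-folding, and this map commutes with $\cPsi^p$. The proof is the evident adaptation of that of Proposition \ref{prop727}, the essential input being that $T^p$ commutes with $\Fold$ and is $\Sigma_p$-equivariant under the obvious action. Since $K_0^\fm(Q) \cong \Z$ is generated by the class $[K_\cdot]$ of the Koszul complex on a regular system of parameters, and since $\cPsi^p[\Fold(K_\cdot)] = p^d [\Fold(K_\cdot)]$ by Corollary \ref{cor728}, we conclude that $\cPsi^p$ acts as $p^d \cdot \id$ on $\rho_0(K_0^\fm(Q))$, and in particular $\chi \circ \cPsi^p = p^d \chi$ on this image.

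\textbf{Step 2 (reduction to a folded representative).} Given $X \in \mf^\fm(Q,0)$, choose $Q$-projective resolutions $P_\cdot^0 \xra{\sim} H^0(X)$ and $P_\cdot^1 \xra{\sim} H^1(X)$; both are bounded since $Q$ is regular. Set
$$Y := \rho_0\bigl([P_\cdot^0]\bigr) - \rho_0\bigl([P_\cdot^1]\bigr) \in K_0(\mf^\fm(Q,0)).$$
Then $\chi(Y) = \len H^0(X) - \len H^1(X) = \chi(X)$, and by Step 1, $\chi(\cPsi^p Y) = p^d \chi(Y) = p^d \chi(X)$. The theorem thus reduces to showing
$$\chi(\cPsi^p[X]) = \chi(\cPsi^p Y). \quad (*)$$

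\textbf{Step 3 (main step: $\chi \circ \cPsi^p$ depends only on homology).} To prove $(*)$, I would aim to show that $\chi(\cPsi^p [-])$ depends only on the isomorphism class of $H^*(-)$ as a $\Z/2$-graded $Q$-module. Using the equivalence $[\mf(Q,0)] \simeq \cD(\lf(Q,0))$ (Proposition \ref{prop726a}) and the existence of bounded Cartan--Eilenberg resolutions (Lemma \ref{lemCE}), choose a bounded CE resolution $X_\cdot \to X$ by proper matrix factorizations. For each proper $X_i$, Lemma \ref{lem730b} gives $H^*(T^p X_i) \cong T^p(H^* X_i)$ as $\Sigma_p$-representations, and since $H^*(X_i)$ is projective, $T^p(X_i)^{(\zeta)}$ is in turn quasi-isomorphic to $T^p(H^*(X_i))^{(\zeta)}$ with zero differential. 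Consequently each $\chi\bigl(T^p(X_i)^{(\zeta)}\bigr)$ can be computed from the projective modules occurring as components of the chosen CE resolution, which in turn depend (up to additions of contractible pieces) only on the projective resolutions of the $H^\epsilon(X)$. A totalization / spectral sequence argument on $T^p(\Tot(X_\cdot)) \simeq \Tot(X_\cdot)^{\otimes p}$, keeping track of the induced $C_p$-action, then yields a formula for $\chi\bigl(T^p(X)^{(\zeta)}\bigr)$, and hence for $\chi(\cPsi^p X)$, depending only on the finite length $Q$-modules $H^\epsilon(X)$. The same formula applied to $Y$ yields $\chi(\cPsi^p X) = \chi(\cPsi^p Y)$, completing the argument.

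\textbf{Anticipated main obstacle.} The delicate point is Step 3: Adams operations are not additive on short exact sequences, so the CE filtration does not map to a filtration of $T^p(X)$ in a naive way. One must instead use the multi-filtration on $T^p(\Tot(X_\cdot))$ induced by the $p$ tensor factors, and carefully track the $\Sigma_p$-equivariant structure (including the sign conventions from the $\Z/2$-grading) through the associated spectral sequence. Controlling the contributions from the "off-diagonal" terms—where different CE columns interact under the $\Sigma_p$-action—requires an equivariant K\"unneth argument, and this is where the majority of the technical work will lie.
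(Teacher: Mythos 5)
Your proposal follows the same overall strategy as the paper's: reduce to the class of $\Hlf(X)$ (your $Y$), note that $[\Hlf(X)]$ is an integer multiple of the folded Koszul class $[K]$, invoke Corollary \ref{cor728} for $\cPsi^p[K] = p^d[K]$, and prove that $\chi(\cPsi^p[-])$ depends only on homology (your Step 3, which is the paper's Lemma \ref{key-sub-lemma}). One small remark on Step 1: the claim that $\rho_0$ intertwines $\cPsi^p$ is more than you need and would itself require a proof; the conclusion you actually draw from it follows directly from $K_0^{\fm}(Q) \cong \Z$ and Corollary \ref{cor728}.

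The imprecision is in Step 3, at the sentence ``each $\chi\bigl(T^p(X_i)^{(\zeta)}\bigr)$ can be computed from the projective modules occurring as components of the chosen CE resolution.'' This cannot be made sense of as stated: $X_i$ is proper, so $\Hlf(X_i)$ is projective and (generically) nonzero, hence $T^p(X_i)^{(\zeta)}$ does not have finite length homology and $\chi$ of an individual column is simply undefined. The way around this --- which the paper packages as Lemma \ref{LemA} --- is to compare Euler characteristics via the two spectral sequences of the double complex $Y_\cdot := T^p(X_\cdot)^{(\zeta)}$; the finiteness hypotheses in that lemma are placed on $\cH_q\Hlf(Y_\cdot)$ and $\Hlf\cH_q(Y_\cdot)$ rather than on the columns themselves. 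The ``columns-first'' side collapses to $\chi(t^p_\zeta(P))$ because $\Tot(X_\cdot) \to P$ is a homotopy equivalence, while the ``homology-first'' side, after the K\"unneth isomorphism $T^p(\Hlf(X_\cdot))^{(\zeta)} \cong \Hlf(T^p(X_\cdot))^{(\zeta)}$ from Lemma \ref{lem730b}, gives $\chi(\bt^p_\zeta(\Hlf(P)))$. Also, the $\Sigma_p$-equivariant interaction you flag as the main obstacle is in fact handled cleanly: $(-)^{(\zeta)}$ is exact and commutes with both $\Tot$ and $\Hlf$, since it is extension of scalars along a projection $B[C_p] \onto B$. The genuine subtlety is making the spectral-sequence argument work in the $\Z/2$-graded setting, which the paper addresses by passing to dg-modules over $B[t,t^{-1}]$ with $|t|=2$.
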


The proof of this theorem occupies the remainder of this subsection.

Fix a prime $p$, and let $B$ be a commutative Noetherian $A_p$-algebra.
Recall the functor $t^p_\zeta$ defined on $\mf(B,0)$ that  
sends $X$ to $T^p(X)^{(\zeta)}$, where $\zeta$ is a $p$-th root of unity. 
It will be useful to interpret this functor as a composition 
$$
\mf(B,0) \xra{T^p}  \mf(B',0) \xra{Y \mapsto Y^{(\zeta)}} \mf(B,0)
$$
where we set $B' = B[C_p] = B[\sigma]/(\sigma^p - 1)$. Since $B$ is an $A_p$-algebra, $B'$ is isomorphic to a product of $p$ copies of $B$ equipped with an
action of $C_p$. 
So, an
object of $\mf(B',0)$ is the same thing as an object of $\mf(B,0)$ equipped with an action of $C_p$, 
and  if $B$ is regular, then so is $B'$.

The functors above preserve the condition that homology has finite length, and they send homotopic maps to homotopic maps, so
that we have an induced functor

$$
t^p_\zeta: [\mffl(B,0)] \to [\mffl(B,0)]
$$
given as the composition of functors
$$
[\mffl(B,0)] \xra{T^p} [\mffl(B',0)] \xra{Y \mapsto Y^{(\zeta)}} [\mffl(B,0)].
$$

We will need a ``derived'' version of the functor $t^p_\zeta$. When $B$ is regular, then we may use the equivalence of Proposition \ref{prop726a} to obtain a functor
$$
\bt^p_\zeta: \cD(\lffl(B,0)) \to [\mffl(B,0)]. 
$$
Explicitly, for $M \in \lffl(B,0)$, $\bt^p_\zeta(M) = t^p_\zeta(P)$ where $P$ is any object of $\mffl(B,0)$ for which there exists a quasi-isomorphism $P
\xra{\sim} M$.

Given $M \in \lf(B,0)$, recall that $\Hlf(M)$ denotes the object of $\lf(B,0)$ given by the 
$\Z/2$-graded $B$-module with components  $H^0(M)$ and $H^1(M)$,
regarded as a complex with trivial differential. In terms of the dg-ring $\tB$, 
$\Hlf(M)$ corresponds to the homology of a dg-$\tB$-module, which is
naturally a dg-$\tB$-module with trivial differential (since $\tB$ has trivial differential). If $M \in \lffl(B,0)$, we define its Euler characteristic by
$$
\chi(M) := \len H^0(M) - \len H^1(M),
$$
as above.

\begin{lem}
\label{key-sub-lemma} If $B$ is a regular $A_p$-algebra, then 
for any $M \in \lffl(B,0)$ and any $p$-th root of unity $\zeta$, we have
$$
\chi(\bt^p_\zeta(M)) = \chi(\bt^p_\zeta(\Hlf(M))).
$$
\end{lem}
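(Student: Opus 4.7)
The plan is to compare $T^p$ applied to totalizations of Cartan--Eilenberg resolutions of $M$ and of $\Hlf(M)$ by means of a spectral sequence. By Lemma~\ref{lemCE}, since $B$ is regular, I would choose a bounded Cartan--Eilenberg resolution $X_\cdot = (0 \to X_n \to \cdots \to X_0 \to 0)$ of $M$ in $\lf(B,0)$ with each $X_i \in \mf(B,0)$ proper. Set $P := \Tot(X_\cdot)$ and $P' := \Tot(\Hlf(X_\cdot))$. Then $P \xra{\sim} M$ and $P' \xra{\sim} \Hlf(M)$, and both $P$ and $P'$ lie in $\mffl(B,0)$: their components are projective (being direct sums of components of proper objects), while their homologies agree with those of $M$ and $\Hlf(M)$, which are of finite length. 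Hence $\bt^p_\zeta(M) = t^p_\zeta(P)$ and $\bt^p_\zeta(\Hlf M) = t^p_\zeta(P')$, and since $f=0$ the functor $\mult_{1/p}$ does not change $\chi$, so it suffices to show $\chi(T^p(P)^{(\zeta)}) = \chi(T^p(P')^{(\zeta)})$.

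Next, I would equip $T^p(P)$ with the bounded, $C_p$-equivariant filtration $F_n$ spanned by the tensors $\bigotimes_{k=1}^p X_{j_k}$ with $\sum_k j_k \leq n$ (cyclic permutation of factors preserves $\sum_k j_k$). The differential on $T^p(P)$ decomposes as $\sum_k d^{(k)} + \sum_k \partial^{(k)}$, where the $d^{(k)}$'s are the internal $\Z/2$-differentials (filtration-preserving) and the $\partial^{(k)}$'s come from the $\Z$-direction of $X_\cdot$ (filtration-degree shift $-1$); there are no pieces of larger shift. In the resulting spectral sequence, $E_0 = T^p(X_\cdot)$ with differential $\sum_k d^{(k)}$, and iterated application of the K\"unneth isomorphism of Lemma~\ref{lem730b} (which uses that each $X_i$ is proper) yields $E_1 \cong T^p(\Hlf(X_\cdot))$ with $d_1 = \sum_k \partial^{(k)}$, i.e., the differential of $T^p(P')$ under the natural identification $\Tot(T^p(\Hlf(X_\cdot))) = T^p(\Tot(\Hlf(X_\cdot))) = T^p(P')$. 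Since only $d_0$ and $d_1$ are nonzero, the spectral sequence degenerates at $E_2$, yielding a finite filtration of $H(T^p(P))$ whose associated graded is $H(T^p(P'))$.

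The $\zeta$-eigenspace functor is exact (as $p$ is invertible in $B$ and $B$ contains all $p$-th roots of unity), so taking $(\zeta)$-eigenspaces preserves the filtration, the identifications of pages, and the degeneration. Both $H(T^p(P))^{(\zeta)}$ and $H(T^p(P'))^{(\zeta)}$ have finite length (since $P$ and $P'$ lie in $\mffl(B,0)$ and the homology of $T^p$ of such an object, being a multi-Tor of a finite length module with itself, is again of finite length). The induced filtration on $H(T^p(P))^{(\zeta)}$ is finite and has associated graded $H(T^p(P'))^{(\zeta)}$, so Euler characteristics agree:
$$\chi(T^p(P)^{(\zeta)}) = \chi(H(T^p(P))^{(\zeta)}) = \chi(H(T^p(P'))^{(\zeta)}) = \chi(T^p(P')^{(\zeta)}),$$
which finishes the argument. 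The main technical obstacle I anticipate is carefully tracking the Koszul signs in the iterated K\"unneth isomorphism so that the identification $E_1 \cong T^p(\Hlf(X_\cdot))$ is genuinely $C_p$-equivariant; this should follow from the naturality of Lemma~\ref{lem730b} together with the standard definitions of the $C_p$-action on $p$-fold tensor powers.
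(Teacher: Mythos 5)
Your overall strategy is the same as the paper's: choose a bounded Cartan--Eilenberg resolution $X_\cdot$ of $M$, use properness and the K\"unneth isomorphism of Lemma~\ref{lem730b} to identify the first page with $T^p(\Hlf(X_\cdot))$, and compare Euler characteristics via the spectral sequence of the resulting filtration on $T^p(\Tot(X_\cdot))$. The paper packages exactly this comparison into Lemma~\ref{LemA} (a two-spectral-sequence Euler characteristic identity for bicomplexes of objects in $\LF(B,0)$) and then applies it to $Y_\cdot := T^p(X_\cdot)^{(\zeta)}$. So the approaches are not genuinely different.

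However, there is a real gap in your argument. You assert that ``since only $d_0$ and $d_1$ are nonzero, the spectral sequence degenerates at $E_2$.'' This confuses the decomposition of the \emph{total} differential into filtration-degree-$0$ and filtration-degree-$(-1)$ pieces (which is just the definition of a filtered complex, or of a double complex) with the vanishing of the spectral-sequence differentials $d_r$ for $r\ge 2$. These are not the same: in the spectral sequence of a general bounded double complex, the higher $d_r$ are typically nonzero, even though the total differential has only two bihomogeneous components. So $E_2 \ne E_\infty$ in general, and the claim that $H(T^p(P'))$ is the associated graded of $H(T^p(P))$ does not follow.

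The conclusion you want still holds, but by a weaker statement that you would need to supply: in a spectral sequence of finite-length $\Z/2$-graded modules, the Euler characteristic is preserved from each page to the next (since $E_{r+1} = H(E_r, d_r)$ and the $d_r$ have odd total degree in the relevant bookkeeping), and $\chi(E_\infty)$ equals $\chi$ of the abutment via the finite filtration. This is precisely what Lemma~\ref{LemA} proves, and the paper is careful there because the sign conventions for Euler characteristics of $\Z/2$-graded objects under a $\Z$-indexed filtration require attention (the paper handles this by passing to the dg-$\tB$-module picture and checking that $d_r$ induces a degree-one map on $\Tot(E_r)$). You should replace the degeneration claim by this page-by-page Euler-characteristic argument; as written, the step is not justified.
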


Theorem \ref{KeyLemma} is a relatively easy consequence of Lemma
\ref{key-sub-lemma}. Before proving Lemma \ref{key-sub-lemma}, we must introduce the following notation and establish one more preliminary result. For a bounded complex 
$$
X_\cdot = (0 \to X_m \to X_{m-1} \to \cdots \to X_1 \to X_0 \to 0)
$$
of objects of  $\LF(B,0)$, we write $\cH_q(X_\cdot) \in \LF(B,0)$ for
its homology taken in the abelian category $\LF(B,0)$; that is,
$$
\cH_q(X_\cdot) = \ker(X_q \to X_{q-1})/\im(X_{q+1} \to X_q).
$$ 
We write $\Hlf(X_\cdot)$ for the complex of objects of $\LF(B,0)$ obtained by applying $\Hlf$ term-wise:
$$
\Hlf(X_\cdot) := (0 \to \Hlf(X_d) \to \cdots \to \Hlf(X_0) \to 0).
$$
Note that $\Hlf(X_\cdot)$ is a complex of $\Z/2$-graded modules, 
and we regard it as another complex of objects in $\LF(B,0)$.

\begin{lem} \label{LemA}
For a Noetherian commutative ring $B$, assume  
$$
Y_\cdot := (0 \to Y_m \to \cdots \to Y_0 \to 0)
$$
is a complex in $\lf(B,0)$ such that both 
$\cH_q \Hlf(Y_\cdot)$ and $\Hlf \cH_q(Y_\cdot)$ have finite length for all $q$. 
Then $\Tot(Y_\cdot)$ belongs to $\lffl(B,0)$, and we have
\begin{align*}
\chi(\Tot(Y_\cdot)) &= 
\sum_{q \in \Z, \e \in \Z/2} (-1)^{q + \e} \len \cH_q(\Hlf^\e(Y_\cdot))\\
&=
\sum_{q \in \Z, \e \in \Z/2} (-1)^{q + \e} \len \Hlf^\e(\cH_q(Y_\cdot)).
\end{align*}
\end{lem}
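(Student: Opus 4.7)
The plan is to view the bounded complex $Y_\cdot$ of objects of $\lf(B,0)$ as a bicomplex, with a bounded $\Z$-direction (indexed by $q = 0, \dots, m$) and a $\Z/2$-direction (indexed by $\e$), and to apply to its totalization the two standard spectral sequences of a bicomplex. Taking $\Z/2$-direction homology first yields a spectral sequence with
\[
E_2^{q,\e} = \cH_q(\Hlf^\e(Y_\cdot)),
\]
while taking $\Z$-direction homology first yields one with
\[
{}'E_2^{q,\e} = \Hlf^\e(\cH_q(Y_\cdot)).
\]
Boundedness of the $q$-direction guarantees that each differential $d_r$ vanishes for $r > m$, so both spectral sequences degenerate at a finite page and converge strongly to $\Hlf(\Tot(Y_\cdot))$.

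By hypothesis, every entry of $E_2$ and of ${}'E_2$ has finite length. Since each later page is a subquotient of its predecessor, every $E_r$ and ${}'E_r$ consists of finite length modules, and the same holds at $E_\infty$ and ${}'E_\infty$. Because $\Hlf(\Tot Y_\cdot)$ carries a finite filtration with associated graded pieces given by the $E_\infty$-terms, it is itself of finite length, which proves $\Tot(Y_\cdot) \in \lffl(B,0)$.

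The heart of the argument is that the Euler characteristic is a spectral-sequence invariant: for any complex of finite length modules, additivity of length in short exact sequences gives that the alternating sum of lengths of the terms equals that of its homology. Applied to each differential $d_r$, this shows $\sum_{q,\e}(-1)^{q+\e}\len E_r^{q,\e}$ is independent of $r$, so the value at $E_2$ equals the value at $E_\infty$. The same additivity, applied to the finite filtration on $\Hlf^\e(\Tot Y_\cdot)$, then yields
\[
\chi(\Tot Y_\cdot) = \sum_{q,\e}(-1)^{q+\e}\len E_\infty^{q,\e},
\]
and running this computation for each of the two spectral sequences produces the two claimed formulas.

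The main obstacle will be the careful bookkeeping of convergence, indexing, and signs in this hybrid $\Z \times \Z/2$-graded setting, where the $\Z/2$-direction wraps; the boundedness of the $q$-direction, however, means both spectral sequences are finite in extent, so the standard arguments transfer directly. If the spectral-sequence formalism proves cumbersome, I would instead induct on $m$ using a brutal truncation short exact sequence of the form $0 \to \sigma^{\geq 1}Y_\cdot \to Y_\cdot \to Y_0[0] \to 0$ in $\lf(B,0)$ together with the long exact sequence in $\Hlf$ of the corresponding totalizations.
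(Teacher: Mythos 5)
Your plan is correct and matches the paper's proof: both use the two standard spectral sequences of the bicomplex $Y_\cdot^\cdot$, observe strong convergence from the uniform vertical bound, and propagate the alternating-sum-of-lengths from $E_2$ through $E_\infty$ to $\Hlf(\Tot(Y_\cdot))$ via the finite filtration. The ``careful bookkeeping'' worry you flag about the $\Z \times \Z/2$ indexing is handled in the paper by lifting to dg-modules over $\tB := B[t,t^{-1}]$ (a degree-two invertible variable), so that each page is genuinely $\Z \times \Z$-bigraded with a degree-$(2,0)$ periodicity isomorphism $t$, and the Euler characteristic becomes $\tchi$ of a graded $\tB$-module -- a clean way to make the standard spectral-sequence argument go through verbatim.
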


\begin{proof}  Our proof uses spectral sequences and is
similar to the proof of the analogous fact concerning $\Z$-graded
bicomplexes, but some  care is needed to deal with the $\Z/2$-grading.

We find it most convenient to work in the setting of
dg-$\tB$-modules. Recall that a dg-$\tB$-module is the same thing
as pair consisting of a $\Z$-graded complex of $B$-modules
and a degree $2$ automorphism. A graded $\tB$-module is a 
dg-$\tB$-module with trivial differential.

Let us say that a graded $\tB$-module $H$ has {\em finite length} if $H^i$ has finite length as a $B$-module for each $i \in \Z$ (or, equivalently, for $i = 0,1$).
In this case, we define
$$
\tchi(H) = \len_B(H^0) - \len_B(H^1).
$$
(Note that $\tchi(H)  = \len_B(H^{2m}) - \len_B(H^{2n+1})$ 
for any $m, n \in \Z$.) It is clear that if $Y \in \lffl(B,0)$, then
$$
\chi(Y) =  \tchi(\tilde{H}(Y))
$$ 
where $\chi$ is as defined before, and $\tilde{H}(Y)$ denotes the homology of $Y$ regarded in the canonical
way as a graded $\tB$-module.

We will need the following fact: If $(M,d)$ is a dg-$\tB$-module
such that the underlying graded $\tB$-module $M$ has finite length, then $H(M,d)$ also has finite length, and $\tchi(H(M,d)) = \tchi(M)$. This is seen to hold by a
straightforward calculation.

We view $Y_\cdot$ as a bicomplex $Y_\cdot^\cdot$ with $m+1$ rows, whose $m$-th row, for $0 \leq j \leq m$, is
$$
\cdots \to Y_j^{-1} \to Y_j^{0} \to Y_j^{1} \to \cdots,
$$
along with  a degree $(2,0)$ isomorphism of bicomplexes $t: Y_\cdot^\cdot \xra{\cong} Y_\cdot^{\cdot+2}$.
Since this bicomplex is uniformly bounded in the vertical direction, we
have two strongly convergent spectral sequences of the form
$$
{}'E_2^{p,-q} = H_q(H^p(Y_\cdot^\cdot)) \Longrightarrow H^{p-q}(\Tot(Y_\cdot^\cdot))
$$
and
$$
{}''E_2^{p,-q} = H^p(H_q(Y_\cdot^\cdot)) \Longrightarrow H^{p-q}(\Tot(Y_\cdot^\cdot)).
$$

Let $E_r^{*,*}, r \geq 2$ refer to either of these two spectral sequences. 
The isomorphism $t: Y_\cdot^\cdot \xra{\cong} Y_\cdot^{\cdot +2}$ 
induces isomorphisms 
$$
t: E_r^{p,-q} \xra{\cong} E_r^{p+2,-q}
$$
for each $r \geq 2$, and similarly on the underlying $D_r$-terms, and these isomorphisms commute with all the maps of the exact couple. 

For any $r$, define a $\Z$-graded $B$-module $\Tot(E_r)$ by 
$$
\Tot(E_r)^n := \bigoplus_{p+q = n} E_r^{p,q}.
$$
The isomorphism $t$ induces an isomorphism of degree $2$ on $\Tot(E_r)$ making it into a 
graded $\tB$-module.
For each $r$, the differential $d_r$ on the $E_r$'s induces a degree one map 
(which we will also write as $d_r$) on $\Tot(E_r)$, and since this map 
commutes with $t$, we have that $(\Tot(E_r), d_r)$ is a dg-$\tB$-module. 
Finally, we have an identity
$$
\Tot(E_{r+1}) = H(\Tot(E_r), d_r)
$$
of graded $\tB$-modules. 

Returning to the two specific instances of this spectral sequence,
the assumptions give that each of $\Tot({}'E_2)$ and $\Tot({}''E_2)$ has finite length, and that we have
\begin{equation} \label{E61}
\begin{aligned}
\tchi(\Tot({}' E_2)) &  = \sum_{q \in \Z, \e \in \Z/2} (-1)^{q + \e} \len \cH_q(\Hlf^\e(Y_\cdot)) \\
\tchi(\Tot({}'' E_2)) & = \sum_{q \in \Z, \e \in \Z/2} (-1)^{q + \e} \len \Hlf^\e(\cH_q(Y_\cdot)). \\
\end{aligned}
\end{equation}
By the general fact mentioned above, we get that each of $\Tot(E_3), \Tot(E_4), \dots$ also has finite
length, and, moreover,
$$
\tchi(\Tot(E_2)) = \tchi(\Tot(E_3)) = \cdots = \tchi(\Tot(E_\infty)).
$$
(Note that the spectral sequence degenerates after at most 
$m+2$ steps, so that $E_{m+2} = E_{m+3} = \cdots = E_\infty$.)

Now, for $\e = 0,1$, the $B$-module $H^\e \Tot(Y)$ admits a filtration by $B$-submodules whose subquotients are 
$E_\infty^{\e,0}, E_\infty^{\e-1,1}, \dots, E_\infty^{\e-m, m}$, and hence
\begin{align*}
\chi(\Tot(Y)) &= \tchi(H(\Tot(Y)) \\  
&= \sum_q \len E_\infty^{-q,q} - 
\sum_q \len E_\infty^{1-q,q} = \tchi(\Tot(E_\infty)) = \tchi(\Tot(E_2)). 
\end{align*}
By \eqref{E61}, the proof is complete.
\end{proof}

\begin{proof}[Proof of Lemma \ref{key-sub-lemma}] We may assume,
  without loss of generality, that $M = P$ belongs to $\mffl(B,0)$. 
Let 
$$
\cdots \to 0 \to X_m \to X_{m-1} \to \cdots \to X_1 \to X_0 \to P \to 0
$$
be a bounded Cartan-Eilenberg resolution of $P$.
Since $P$ is an object of $\mf(B,0)$, the induced quasi-isomorphism
$\Tot(X_\cdot) \xra{\sim} P$ is  a homotopy equivalence, a fact
that will be used below.
 
Recall that $X_i$ is proper. In particular, $H(X_i)$ is projective for
all $i$, and the induced complex
$$
\cdots \to 0 \to H(X_m) \to H(X_{m-1}) \to \cdots \to H(X_1) \to H(X_0) \to H(P) \to 0
$$
is also exact.
The latter gives, by definition,
\begin{equation} \label{E62d}
\bt^p_\zeta(\Hlf(P))  = t^p_\zeta(\Tot(\Hlf(X_\cdot))) = T^p(\Tot(\Hlf(X_\cdot)))^{(\zeta)}.
\end{equation}


For any bounded  complex $Y_\cdot$ of objects of $\mf(B,0)$, write 
$T^p(Y_\cdot)$  for the complex of objects in $\mf(B,0)$ that, in degree $j$, is
$$
T^p(Y_\cdot)_j = \bigoplus_{i_1 + \cdots + i_p = j} Y_{i_1} \otimes_{\LF} \cdots \otimes_{\LF} Y_{i_p}.
$$
For example, if $p = 2$, then $T^2(Y_\cdot)$
is the complex 
$$
\cdots \to (Y_2 \otimes Y_0  \oplus Y_1 \otimes Y_1 \oplus Y_0 \otimes Y_2)
\to (Y_1 \otimes Y_0  \oplus Y_0 \otimes Y_1) \to Y_0 \otimes Y_0 \to 0.
$$
Each term of the complex $T^p(Y)$ admits an evident signed action by $C_p$, 
and the maps of this complex respect these actions, so that we may regard 
$T^p(Y_\cdot)$ as a complex in $\mf(B',0)$, where $B' := B[C_p]$. 
We have an identity
\begin{equation} \label{E62b}
T^p(\Tot(Y_\cdot))  =  \Tot(T^p(Y_\cdot)) 
\end{equation}
of objects of $\mf(B',0)$.

Since $B$ is an $A_p$-algebra, $(-)^{(\zeta)}$ is an exact
functor from $\lf(B',0)$ to $\lf(B,0)$. In fact, $B'$ is a product of
copies of $B$, and this functor is given by extension of scalars along one of
the canonical projections $B' \onto B$. In particular, we have
\begin{equation} \label{E62e}
\Tot(Y_\cdot)^{(\zeta)} = \Tot\left(Y_\cdot^{(\zeta)}\right)
\end{equation}
for any bounded complex $Y_\cdot$ of objects of $\lf(B',0)$, and
\begin{equation} \label{E62f}
\Hlf(Y)^{(\zeta)} = \Hlf(Y^{(\zeta)})
\end{equation}
for any object $Y \in \lf(B',0)$.

Since each $X_i$ is proper, Lemma \ref{lem730b} implies that we have canonical isomorphisms 
$$
\Hlf(X_{i_1}) \otimes_{\LF} \cdots \otimes_{\LF} \Hlf(X_{i_p}) 
\xra{\cong} \Hlf(X_{i_1}  \otimes_{\LF} \cdots \otimes_{\LF} X_{i_p})
$$
which combine to give an  isomorphism
\begin{equation} \label{E62c}
T^p(\Hlf(X_\cdot)) \xra{\cong} \Hlf(T^p(X_\cdot))
\end{equation}
of complexes of objects of $\mf(B',0)$.

Combining these facts gives
$$
\begin{aligned}
\bt^p_\zeta(\Hlf(P))  & = T^p(\Tot(\Hlf(X_\cdot)))^{(\zeta)}, \text{ by \eqref{E62d},} \\
& = \left(\Tot(T^p(\Hlf(X_\cdot)))\right)^{(\zeta)}, \text{ by \eqref{E62b},} \\
& = \Tot\left(T^p(\Hlf(X_\cdot))^{(\zeta)}\right) , \text{ by \eqref{E62e},} \\     
& = \Tot\left(\Hlf(T^p(X_\cdot))^{(\zeta)}\right),    \text{ by \eqref{E62c},} \\
& = \Tot\left(\Hlf(T^p(X_\cdot)^{(\zeta)})\right) , \text{ by \eqref{E62f}.} \\ 
\end{aligned}
$$

We now apply Lemma \ref{LemA} to the complex
$Y_\cdot := T^p(X_\cdot)^{(\zeta)}$
of objects in $\mf(B,0)$, which gives
\begin{equation} \label{E61b}
\sum_{q, \e} (-1)^{q+\e} \len \cH_q(\Hlf^\e(Y_\cdot)) = 
\sum_{q,\e} (-1)^{q+\e} \len \Hlf^\e(\cH_q(Y_\cdot)).
\end{equation}
Since we have shown
$\Tot(\Hlf(Y_\cdot)) \cong \bt^p_\zeta(\Hlf(P))$, the left-hand side of \eqref{E61b} is
$\chi (\bt_\zeta^p(\Hlf(P)))$.

Recall that, since $P$ belongs to $\mf(B,0)$, the quasi-isomorphism $\Tot(X_\cdot)
\xra{\sim} P$ is a homotopy equivalence. It follows that the map
$$
\Tot(Y_\cdot)
\cong
T^p(\Tot(X_\cdot))^{(\zeta)}
\to T^p(P)^{(\zeta)}.
$$ 
is also a homotopy equivalence.
We get
$$
\Hlf^\e(\cH_q(Y_\cdot)) \cong
\begin{cases}
\Hlf^\e(t^p_\zeta(P)), & \text{if $q = 0$ and } \\
0, & \text{otherwise,} \\
\end{cases}
$$
which shows that the right-hand side of \eqref{E61b} is $\chi(t^p_\zeta(P))$.
\end{proof}

\begin{proof}[Proof of Theorem \ref{KeyLemma}]
Let $P \in \mf^\fm(Q,0) = \mffl(Q,0)$. By definition,
$$
\chi\left(\cPsi^p([P])\right) = 
\sum_{\zeta} \zeta \chi(t^p_{\zeta}(P)).
$$
By Lemma \ref{key-sub-lemma}, the value of the right-hand side of this equation 
coincides with $\sum_{\zeta} \zeta \chi(\bt^p_{\zeta}(\Hlf(P)))$.
Since $\Hlf(P)$ has trivial differential, the class 
$$
[\Hlf(P)] \in K_0(\cD(\lffl(Q,0))) \cong K_0(\mf^\fm(Q,0))
$$ 
is an integer multiple of the class of the residue field $k =
Q/\fm$, which in turn coincides with the class of 
the folded Koszul
complex $K \in \mf^\fm(Q,0)$. This proves that the equation of 
Theorem \ref{KeyLemma} holds in general provided it holds for the class $[K]$, 
and that special case is known to hold by Corollary \ref{cor728}.  
\end{proof}

\subsection{Proof of the conjecture} 
\label{sec-dao}

Throughout this section, we assume $(Q, \fm)$ is a regular local ring and $f$ is a non-zero element of $\fm$, and we set $R = Q/(f)$. We also assume $R$ is an
isolated singularity; that is, we assume $R_\fp$ is regular for all $\fp \in \Spec(R) \setminus \{\fm\}$.   
Recall from the introduction that these conditions lead to a well-defined invariant for a pair $(M,N)$ of finitely generated $R$-modules:
$$
\theta_R(M,N) = \len\left(\Tor_{2n}^R(M,N)\right) -  \len\left(\Tor_{2n+1}^R(M,N)\right)
$$
for $n \gg 0$.

For a finitely generated $R$-module $M$, $[M]_\stable$  denotes its associated class in $K_0(\mf(Q,f))$, given by the surjection $G_0(R) \onto 
K_0(\mf(Q,f))$ described in Subsection 2.3. 
Recall that $[M]_\stable = [\Fold(P,d,s)]$,
where $P$ is a $Q$-projective resolution of $M$ admitting a degree one endomophism $s$ that satisfies $ds + sd = f$ and $s^2 = 0$, that is, a Koszul resolution.

For a matrix factorization $X \in \mf(Q,f)$, 
write $X^\circ$ for $\mult_{-1}X \in \mf(Q,-f)$. That is, 
if $X = (\a:P_1 \to P_0, \b: P_0 \to P_1)$, then
$X^\circ = (\a, -\b)$.
We also use the notation $(-)^\circ$ to denote the induced isomorphism 
$K_0(\mf(Q,f)) \xra{\cong} K_0(\mf(Q,-f))$. 
For a finitely generated $R$-module $N$, the class $[N]^\circ_\stable$ is the image of $[N]$ under $G_0(R) \onto K_0(\mf(Q,-f))$, using that $Q/(f) = Q/(-f)$.

\begin{prop} \label{prop727b}
For $Q, \fm, f, R, M$ and $N$ as in Definition \ref{def728},
$$
\theta_R(M,N) = \chi \left( [M]_\stable \cup [N]_\stable^\circ \right).
$$
\end{prop}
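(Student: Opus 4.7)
The plan is to represent both classes explicitly by matrix factorizations, verify a support condition ensuring well-definedness, and then identify the $\Z/2$-graded homology of their tensor product with the stable $\Tor^R(M,N)$ that defines $\theta_R$.

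First, following the construction in \S 2.3, fix $Q$-projective resolutions $(P, d)$ of $M$ and $(P', d')$ of $N$ together with Koszul structures $s, s'$, and set $X := \Fold(P, d, s) \in \mf(Q, f)$ and $Y^\circ := \Fold(P', d', -s') \in \mf(Q, -f)$. Since $\partial_X^2 = f$ and $\partial_{Y^\circ}^2 = -f$, the tensor product $W := X \otimes_Q Y^\circ$ lies in $\mf(Q, 0)$, and $[M]_\stable \cup [N]_\stable^\circ = [W]$. To see that $\chi(W)$ is defined, I verify $\supp(W) \subseteq \{\fm\}$: for any prime $\fp \neq \fm$ of $Q$, either $f \notin \fp$, in which case $X_\fp$ is a matrix factorization of a unit and so is contractible, or $f \in \fp$, in which case $R_\fp$ is regular by the isolated-singularity hypothesis, so $\Dsing(R_\fp) = 0$ and by Orlov's Theorem \ref{introthm1} $X_\fp$ is contractible in $\mf(Q_\fp, f)$. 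Either way $W_\fp$ is contractible, so the finitely generated $Q$-modules $H^0(W), H^1(W)$ are supported on $\{\fm\}$ and hence of finite length.

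The core step is to identify $H^\epsilon(W) \cong \Tor^R_{2n+\epsilon}(M,N)$ for all $n \gg 0$ and $\epsilon \in \{0,1\}$. For this I plan to invoke the Shamash construction: the Koszul datum $(P, d, s)$ assembles into an $R$-projective resolution $F_\bullet \to M$ with $F_n = \bigoplus_{n - 2i \geq 0} R \otimes_Q P_{n-2i}$, and in degrees $n \geq \pd_Q M$ one has a canonical identification $F_n \cong R \otimes_Q X^{n \bmod 2}$ with differentials induced by the matrix factorization maps of $X$. Using $fN = 0$, the complex $F_\bullet \otimes_R N$ becomes, in high degree, the $2$-periodic complex $\cdots \to X^1 \otimes_Q N \to X^0 \otimes_Q N \to X^1 \otimes_Q N \to \cdots$, so $\Tor^R_n(M,N) \cong H^{n \bmod 2}(X \otimes_Q N)$ for $n \gg 0$, where $X \otimes_Q N$ is regarded as an object of $\mf(Q,0)$ by virtue of $fN=0$. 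Separately, the augmentation $Y^\circ \to N$ is a chain map of underlying $\Z/2$-graded $Q$-modules (from a matrix factorization of $-f$ to one of $0$), and tensoring with the $Q$-flat matrix factorization $X$ yields a chain map $W \to X \otimes_Q N$ of $\Z/2$-graded complexes in $\mf(Q,0)$. I plan to show that this map is a quasi-isomorphism, which will give $H^\epsilon(W) \cong H^\epsilon(X \otimes_Q N) \cong \Tor^R_{2n+\epsilon}(M,N)$ for $n \gg 0$, and hence $\chi(W) = \theta_R(M,N)$.

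The main obstacle is the quasi-isomorphism $W \xrightarrow{\sim} X \otimes_Q N$. The natural filtration of $W$ by $P'$-degree is preserved by the $d' \otimes 1$-part of the differential but not by the Koszul correction $s' \otimes 1$, which increases $P'$-degree by one. I plan to address this by running the spectral sequence associated to a suitably shifted filtration, where the $s'$-terms appear only in higher differentials and are seen to vanish in the limit by using that $P' \to N$ is a $Q$-resolution. Alternatively, one can sidestep this entirely by first reducing to the case in which $M$ and $N$ are both MCM over $R$ (using classical MCM approximation together with the additivity of both $\theta_R(-,-)$ and $\chi([-]_\stable \cup [-]_\stable^\circ)$ on short exact sequences), where both $X$ and $Y^\circ$ are two-term and the identification of $\chi(W)$ with $\theta_R(M,N)$ is a direct computation in the spirit of Buchweitz \cite{buchweitz1986maximal}.
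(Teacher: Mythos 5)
Your primary route---passing through the Shamash/Eisenbud resolution and a spectral sequence for the filtration by $P'$-degree---has a genuine gap that you yourself flag: the Koszul correction $s' \otimes 1$ raises the filtration degree, so the filtration you name is not a filtration by subcomplexes, and the ``suitably shifted filtration'' needed to make a spectral sequence run is left unspecified. Showing that the $s'$-contributions ``vanish in the limit'' is precisely the content that needs to be supplied, so as written this step does not close.

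Your sidestepping suggestion is essentially the paper's argument, and you should take that route---with two adjustments that make it simpler. First, the paper reduces only $N$ to be maximal Cohen--Macaulay, so that $Y$ is a genuine two-term matrix factorization of $-f$ with $N = \coker(d_1')$; $M$ is left arbitrary. Second, in place of ``a direct computation in the spirit of Buchweitz,'' the paper produces a short, explicit quasi-isomorphism. Let $Z := (0 \to N,\; N \to 0) \in \lf(Q,-f)$ (a linear factorization of $-f$ because $fN = 0$), and let $\alpha \colon Y \to Z$ be the augmentation. Then $\id_X \otimes \alpha \colon X \otimes Y \to X \otimes Z$ is surjective, and one identifies its kernel with $X \otimes T$ where $T := (Y_1 \xrightarrow{\id} Y_1,\; Y_1 \xrightarrow{-f} Y_1) \in \lf(Q,-f)$. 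Since one of the two structure maps of $T$ is the identity, $T$ is contractible; hence $X \otimes T$ is contractible, $\id_X \otimes \alpha$ is a quasi-isomorphism, and $\chi(X \otimes Y) = \chi(X \otimes Z) = \chi(X \otimes_Q N)$. Combined with your correct preliminary observations---that the isolated-singularity hypothesis forces $\supp(X \otimes Y) \subseteq \{\fm\}$, and that $H^{\epsilon}(X \otimes_Q N) \cong \Tor^R_{2n+\epsilon}(M,N)$ for $n \gg 0$---this gives $\theta_R(M,N) = \chi\left([M]_\stable \cup [N]_\stable^\circ\right)$.
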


\begin{proof}
First note that, since $f$ is an isolated singularity, one has 
$$K_0(\mf(Q,\pm f)) = K_0(\mf^{\fm}(Q,\pm f))$$ 
and hence  
$$[M]_\stable \cup [N]_\stable^\circ \in K_0(\mf^{\fm}(Q,f+(-f))) = K_0(\mf^{\fm}(Q,0)).$$

Choose matrix factorizations $X=(d_1: X_1 \to X_0, d_0: X_0 \to X_1)$ and $Y = (d_1': Y_1 \to Y_0, d_0': Y_0 \to Y_1)$ such that $[X]=[M]_\stable$ and $[Y]=[N]_\stable^{\circ}$. Assume, without loss of generality, that $N$ is maximal Cohen-Macaulay, and $N = \coker(d_1')$. 

Let $Z$ denote the object $(0 \to N, N \to 0)$ of $\lf(Q, -f)$; here, 0 is in odd degree and $N$ is in even degree. Let $\alpha: Y \to Z$ be the morphism in $\lf(Q, -f)$ given by the canonical surjection in even degree and, of course, the zero map in odd degree. Since $\theta(M,N)$ clearly coincides with the Euler characteristic of $X \otimes Z$, it suffices to show that the morphism
$$\id \otimes \alpha: X \otimes Y \to X \otimes Z$$
in $\lf(Q, 0)$ is a quasi-isomorphism. The map $\id \otimes \alpha$ is clearly surjective, so it suffices to show that its kernel is acyclic. An easy calculation shows that $\ker(\id \otimes \alpha) \cong X \otimes T$, where $T$ is the object $(Y_1 \xrightarrow{\id} Y_1, Y_1 \xrightarrow{-f} Y_1) \in \lf(Q, -f)$. Since $T$ is contractible, $X \otimes T$ is contractible; thus, $\id \otimes \alpha$ is a quasi-isomorphism.
\end{proof}

We now prove the conjecture of Dao-Kurano:

\begin{thm} \label{dao}
Let $(Q, \fm)$ be a regular local ring and  $f \in \fm$ a non-zero element, and assume $R := Q/(f)$ is an isolated singularity. 
If $M$ and $N$ are finitely generated $R$-modules such that 
$$
\dim M + \dim N \leq \dim R
$$
then $\theta_R(M,N)=0$. 
\end{thm}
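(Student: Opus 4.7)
The plan is to execute the strategy outlined at the end of the introduction, which combines all of the main results of the paper. First, reduce to the case where $Q$ is an $A_p$-algebra for some prime $p$. Since everything is local, one can pass to a faithfully flat local extension $Q \to Q'$ of regular local rings (for instance, by $\fm$-adic completion followed by adjoining $p$-th roots of unity for a prime $p$ invertible in the residue field), and check that the isolated singularity property, the dimension inequality, and the vanishing of $\theta$ descend. The descent for $\theta$ follows from the behavior of $\mathrm{Tor}$ and length under faithfully flat base change.

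Assuming $Q$ is an $A_p$-algebra, apply Proposition \ref{prop727b} to rewrite
\[
\theta_R(M,N) = \chi\bigl([M]_\stable \cup [N]_\stable^\circ\bigr),
\]
where the cup product lies in $K_0(\mf^{\fm}(Q,0))$ thanks to the isolated singularity hypothesis. By Theorem \ref{psi-eigenspaces}, the class $[M]_\stable$ lies in $\bigoplus_{i \geq \codim_R M + 1} K_0(\mf(Q,f))_\Q^{(i)}$, and an entirely parallel decomposition holds for $[N]_\stable^\circ \in K_0(\mf(Q,-f))_\Q$. The multiplicativity of $\cPsi^p$ (Theorem \ref{psi-GS-axioms-mf}(2)) implies that $[M]_\stable \cup [N]_\stable^\circ$ is a $\Q$-linear combination of $\cPsi^p$-eigenvectors in $K_0(\mf^{\fm}(Q,0))_\Q$ with eigenvalues $p^k$ for $k \geq \codim_R M + \codim_R N + 2$.

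The hypothesis $\dim M + \dim N \leq \dim R$ yields
\[
\codim_R M + \codim_R N \ \geq\ 2\dim R - \dim R \ =\ \dim R \ =\ \dim Q - 1,
\]
so the exponents appearing in the decomposition of $[M]_\stable \cup [N]_\stable^\circ$ satisfy $k \geq \dim Q + 1$. Now invoke Theorem \ref{KeyLemma}: on $K_0(\mf^{\fm}(Q,0))$ one has $\chi \circ \cPsi^p = p^d \chi$, where $d = \dim Q$. For any $\cPsi^p$-eigenvector $\alpha$ of eigenvalue $p^k$ with $k > d$, this gives $p^k \chi(\alpha) = p^d \chi(\alpha)$, forcing $\chi(\alpha) = 0$. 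Summing over all the eigen-components yields $\chi([M]_\stable \cup [N]_\stable^\circ) = 0$, hence $\theta_R(M,N) = 0$.

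The hard part has already been done: namely, the construction of $\cPsi^p$ on matrix factorizations with its Gillet--Soulé axioms (Theorem \ref{psi-GS-axioms-mf}), the eigenspace decomposition for $[M]_\stable$ (Theorem \ref{psi-eigenspaces}), and especially Theorem \ref{KeyLemma}, which substitutes for the unproved statement that $K_0(\mf^{\fm}(Q,0))$ is generated by the folded Koszul complex. The only remaining subtlety in the argument above is the initial reduction to the $A_p$-algebra case; one must verify that isolatedness of the singularity, the dimension inequality, and the value of $\theta$ transfer correctly under the chosen faithfully flat extension, but this is routine.
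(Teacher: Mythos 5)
Your argument is correct and follows the same route as the paper: reduce to the $A_p$-algebra case, apply Proposition \ref{prop727b} to rewrite $\theta_R$ as an Euler characteristic of a cup product, decompose $[M]_\stable$ and $[N]_\stable^\circ$ into $\cPsi^p$-eigenspaces via Theorem \ref{psi-eigenspaces} (with Corollary \ref{cor727c} handling $(-)^\circ$), apply multiplicativity from Theorem \ref{psi-GS-axioms-mf}(2), and conclude from Theorem \ref{KeyLemma} that all the eigen-components with weight exceeding $d=\dim Q$ have vanishing Euler characteristic. One caveat worth flagging: your parenthetical suggestion of passing through $\fm$-adic completion is unnecessary and could actually undermine the generality of the statement, since for a non-excellent regular local ring completion need not preserve the hypothesis that $Q/(f)$ is an isolated singularity. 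The paper's reduction works precisely because it avoids completion and instead only passes to a localization of $Q[x]/(x^p-1)$, which is \'etale over $Q$ (as $p$ is invertible) and therefore preserves both regularity and isolatedness of the singular locus.
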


\begin{proof}
Let $p$ be any prime that is invertible in $Q$. We start by reducing to the case where $Q$ contains a primitive $p$-th root of unity.
If not, we 
form the faithfully flat extension $Q \subseteq Q'$
where $Q'$ is the localization of $Q[x]/(x^p-1)$ at any one of the maximal ideals lying over $\fm$, and set $R' = Q'/f \cong R \otimes_Q Q'$. Note that $R
\subseteq R'$ is also faithfully flat, and thus
$$
\Tor_i^R(M,N) \otimes_R R' \cong \Tor_i^{R'}(M \otimes_R R', N \otimes_R R').
$$
It follows that
$$
\theta_{R'}(M \otimes_R R',N \otimes_R R')  = [R'/\fm': R/\fm] \cdot \theta_R(M, N), 
$$
and so we may replace $Q$ with $Q'$.

Set $d = \dim Q$, $c_M =\codim_Q M$ and $c_N =\codim_Q N$.  
The hypothesis that $\dim M + \dim N \leq \dim R = d-1$ yields $c_M+c_N \geq d+1$. 
By Theorem~\ref{psi-eigenspaces}, the classes $[M]_\stable, [N]_\stable \in K_0(\mf(Q,f)) \otimes \Q$ decompose uniquely as
$$
[M]_\stable  = \sum_{i=c_M}^{d}  X_i
$$
and 
$$
[N]_\stable = \sum_{j=c_N}^{d}  Y_j
$$
where $X_i, Y_j$ are such that $\cPsi^p(X_i) = p^i X_i$ and 
$\cPsi^p(Y_j) = p^j Y_j$.  
Then 
$$
[N]_\stable^\circ = \sum_{j=c_N}^{d}  Y_j^\circ
$$
and, 
by Corollary \ref{cor727c}, $\cPsi^p(Y_j^\circ) = p^j Y_j^\circ$ for all $j$. 

By Proposition \ref{prop727b}, we have
$$
\theta_R(M, N) = \chi([M]_\stable \cup [N]_\stable^\circ) = \sum_{i,j} \chi(X_i \cup Y_j^\circ),
$$
and so it suffices to prove $\chi(X_i \cup Y_j^\circ) = 0$ for all $i,j$. 
For any $i,j$, 
\begin{align*}
p^d \chi(X_i \cup Y_j^\circ)   
&= \chi(\cPsi^p(X_i \cup Y_j^\circ)) \\
&= \chi(\cPsi^p(X_i) \cup \cPsi^p(Y_j^\circ)) \\
&= \chi(p^i X_i \cup p^j Y_j^\circ) \\
&= p^{i+j} \chi(X_i \cup Y_j^\circ),  
\end{align*} 
where the first equality is by Theorem \ref{KeyLemma},
the second is by Theorem \ref{psi-GS-axioms-mf},
and the third is by definition of $X_i$ and $Y_j$. 
Since Theorem \ref{psi-eigenspaces} yields that 
$i+j \geq c_M + c_N  > d$, we conclude that 
$\chi(X_i \cup Y_j^\circ)  = 0$.
\end{proof}

\bibliographystyle{amsalpha}
\bibliography{bibliography}

\end{document}